\DeclareFontFamily{U}{wncy}{}
    \DeclareFontShape{U}{wncy}{m}{n}{<->wncyr10}{}
    \DeclareSymbolFont{mcy}{U}{wncy}{m}{n}
    \DeclareMathSymbol{\Sh}{\mathord}{mcy}{"58} 
\theoremstyle{plain} 
\newtheorem{lemma}[equation]{Lemma} 
\newtheorem{theorem}[equation]{Theorem}
\theoremstyle{definition}
\newtheorem{defi}{Definition}[section]
\theoremstyle{remark}
\numberwithin{equation}{section}
\newcommand{\set}[2]{\left\{\,#1 : #2 \, \right\}}
\newcommand{\inn}[2]{\left\langle #1 , #2 \right\rangle}
\newcommand{\ave}[1]{\left\langle #1 \right\rangle}
\begin{document}

\title[Sparse T1]{The Sparse T1 Theorem}

\author[Lacey]{Michael T. Lacey}   

\address{ School of Mathematics, Georgia Institute of Technology, Atlanta GA 30332, USA}
\email {lacey@math.gatech.edu}
\thanks{Research supported in part by grant  NSF-DMS-1600693}

\author[Mena]{Dar\'io Mena Arias}   

\address{ School of Mathematics, Georgia Institute of Technology, Atlanta GA 30332, USA}
\email {dario.mena@math.gatech.edu}

\begin{abstract}
We impose  standard $ T1$-type assumptions on a Calder\'on-Zygmund operator $ T$, and deduce that
for bounded compactly supported functions $ f, g$ there is a sparse bilinear form $ \Lambda $ so
that 
\begin{equation*}
\lvert  \langle T f, g \rangle\rvert \lesssim \Lambda (f,g). 
\end{equation*}
The proof is short and elementary.  The sparse bound quickly implies all the standard mapping
properties of a Calder\'on-Zygmund on a (weighted) $ L ^{p}$ space.  
\end{abstract}

\maketitle

\section{Introduction} 

We recast the statement of the $ T1 $ theorem of David and Journ\'e \cite{MR763911}, replacing the
conclusion that the operator $ T$ admits a quantitative bound on its $ L ^2 $-norm, with the
conclusion that $ T$ admits a quantitative sparse bound. 
From the sparse bound, one can quickly derive a wide range of (weighted) $ L ^{p}$ type inequalities
for $ T$. 
That is, the theory devoted to deriving these properties for $ T$ can be replaced by the much
simpler approach via sparse operators.  

We say that an operator $ T$ is a  Calder\'on-Zygmund operator on $ \mathbb R ^{d}$ if 
(a) it is bounded on $ L ^{2}$,  (b) there is a kernel $ K (x,y) \;:\; \mathbb R ^{d} \times \mathbb
R ^{d} \setminus \{ (x,x) \;:\; x\in \mathbb R ^{d}\} \to \mathbb R $ so that for functions
functions $ f, g$ smooth,  compactly supported, have disjoint closed supports, 
\begin{equation*}
B_T (f,g) = 
\langle T f, g \rangle = \int\!\!\int K (x,y) f (y) g (x)\; dy dy . 
\end{equation*}
(c) For some constant $\mathcal K_T$, the kernel $ K (x,y)$ satisfies 
\begin{gather}\label{e:size}
 \lvert  K (x,y)\rvert \leq \frac{\mathcal K_T}{ \lvert  x-y\rvert }, \qquad x\neq y \in \mathbb R ^{d}, 
 \\
 \lvert  K (x,y) - K (x',y)\rvert < \mathcal K_T \frac{ \lvert  x-x'\rvert ^{\eta } } { \lvert  x-y\rvert
^{d +
\eta } }, \qquad  0 < 2 \lvert  x-x'\rvert < \lvert  x-y\rvert   . 
\end{gather}
And, the same condition with the roles of $ x$ and $ y$ reversed.  
Above, $ \eta >0$ is a fixed small constant.

A sparse bilinear form $ \Lambda (f,g)$ is defined this way:  There is a collection of cubes $
\mathcal S$, 
so that for each $ S\in \mathcal S$, there is an $ E_S \subset S$ so that (a) $ \lvert  E_S\rvert >
c \lvert  S\rvert  $, 
and (b)  $\lVert \sum_{S\in \mathcal S} \mathbf 1_{E_S}\rVert _ \infty \leq c ^{-1}$. 
Then, set 
\begin{equation*}
\Lambda (f,g) = \sum_{S\in \mathcal S} \langle f \rangle_S \langle g \rangle_S \lvert  S\rvert,  
\end{equation*}
where $ \langle f \rangle_S = \lvert  S\rvert ^{-1} \int _{S} f (x) \; dx $.  
Here, we will not focus on the role of the constant $ 0 < c < 1$, and remark that many times it is
assumed that the sets $ E _S $ 
being pairwise disjoint, that is $\lVert \sum_{S\in \mathcal S} \mathbf 1_{E_S}\rVert _ \infty =1 $.
 
Our generalization does not affect the outlines of the theory, and makes some arguments somewhat
simpler.  

It is very useful to think of $ \Lambda (f,g)$ as a \emph{positive bilinear Calder\'on-Zygmund
form.}  
In particular, all the standard   inequalities can be quickly proved for $ \Lambda $. 
And, for weighted inequalities, it is easy to derive bounds that are sharp in the $ A_p$
characteristic.  

Our formulation of the $ T1$ theorem considers the usual $ L ^{1}$ testing condition on $ T$,
phrased in bilinear language. 

\begin{theorem}\label{t:T1}  Suppose that $ T$ is a Calder\'on-Zygmund operator on $ \mathbb R
^{d}$, and moreover there is a constant $ \mathbf T$ so that for all cubes $ Q$  and functions $
\lvert  \phi \rvert < \mathbf 1_{Q} $, there holds 
\begin{equation}\label{e:T1}
\lvert  B_T (\mathbf 1_{Q}, \phi )\rvert 
+\lvert  B_T (\phi ,\mathbf 1_{Q} )\rvert \leq \mathbf T \lvert  Q\rvert.  
\end{equation}
Then there is a constant $ C = C ( \mathcal K_T, \mathbf T, d , \eta )$ so that for all bounded
compactly supported
functions $ f, g$, there is a sparse operator $ \Lambda $ so that 
\begin{equation}\label{e:T<}
\lvert  B_T (f,g)\rvert < C \Lambda (\lvert  f\rvert, \lvert  g\rvert  ).   
\end{equation}
\end{theorem}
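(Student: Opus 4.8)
The standard way to produce a sparse bound is via a recursive "stopping cube" construction. The plan is to fix a large cube $Q_0$ containing the supports of $f$ and $g$, and build the sparse family $\mathcal S$ by iterating a single local estimate: show that for any cube $Q$ in the family,
\begin{equation*}
\lvert B_T(\mathbf 1_Q f, \mathbf 1_Q g)\rvert \lesssim \langle f\rangle_{3Q}\,\langle g\rangle_{3Q}\,\lvert Q\rvert + \sum_{Q' } \lvert B_T(\mathbf 1_{Q'} f,\mathbf 1_{Q'} g)\rvert,
\end{equation*}
where the $Q'$ are pairwise disjoint subcubes of $Q$ with $\sum \lvert Q'\rvert \leq \tfrac12 \lvert Q\rvert$ (or some fixed fraction). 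Iterating this and summing the geometric series in the recursion depth gives \eqref{e:T<}, with $\mathcal S$ the collection of all cubes generated and $E_S = S \setminus \bigcup Q'$.

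**The local estimate.**

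To prove the displayed local inequality, the stopping cubes $Q'$ should be chosen as the maximal subcubes of $Q$ on which either $\langle \lvert f\rvert\rangle_{Q'} > A\langle\lvert f\rvert\rangle_{3Q}$ or $\langle\lvert g\rvert\rangle_{Q'} > A\langle\lvert g\rvert\rangle_{3Q}$, for a large dimensional constant $A$; by the maximal theorem $\sum\lvert Q'\rvert$ is a small fraction of $\lvert Q\rvert$. On the complement of $\bigcup Q'$ the functions $f,g$ are controlled pointwise (up to the Hardy–Littlewood maximal function) by their averages over $3Q$. Then one decomposes $B_T(\mathbf 1_Q f, \mathbf 1_Q g)$ by splitting both arguments over the Whitney-type pieces relative to $\{Q'\}$ plus the "good" part: the good-good interaction is handled by the testing hypothesis \eqref{e:T1} after replacing $f,g$ on the good set by bounded multiples of indicators (this is where $\mathbf T$ enters, and where the sparse bound $\langle f\rangle_{3Q}\langle g\rangle_{3Q}\lvert Q\rvert$ comes from); the interactions involving at least one stopping cube $Q'$ are estimated off-diagonal using the kernel bounds \eqref{e:size}, summing a geometric series in the scales and using $\sum\lvert Q'\rvert \lesssim \lvert Q\rvert$, and get absorbed into either the main term or the recursive terms $\lvert B_T(\mathbf 1_{Q'}f,\mathbf 1_{Q'}g)\rvert$.

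**Reducing to the model bilinear form and handling the boundary terms.**

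Some care is needed because $B_T$ is only defined by the kernel representation on functions with disjoint supports, while $\mathbf 1_Q f$ and $\mathbf 1_Q g$ overlap. The plan is to work with the genuinely off-diagonal pieces via the kernel and to route the diagonal/adjacent pieces through the testing condition \eqref{e:T1}, using the $L^2$ boundedness of $T$ only to make sense of $B_T(\mathbf 1_Q f, \mathbf 1_Q g)$ as a well-defined quantity in the first place. A mild technical point is to upgrade \eqref{e:T1} from indicators $\phi$ to the bounded "good" parts of $f$ and $g$; since $\lvert\phi\rvert < \mathbf 1_Q$ is allowed this is immediate after normalizing by the stopping averages.

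**Main obstacle.**

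The main obstacle I expect is the bookkeeping in the off-diagonal/long-range interaction between a stopping cube $Q'$ and the rest of $Q$ (and between two different stopping cubes): one must show these sum to something controlled by the main sparse term and the recursion, using only the size and smoothness of $K$ and the packing bound $\sum\lvert Q'\rvert\lesssim\lvert Q\rvert$. Organizing this cleanly — so that the constant does not blow up through the recursion and so that every term is accounted for by either $\langle f\rangle_{3Q}\langle g\rangle_{3Q}\lvert Q\rvert$ or a child term $\lvert B_T(\mathbf 1_{Q'}f,\mathbf 1_{Q'}g)\rvert$ — is the technical heart of the argument; everything else (the maximal function estimates, the geometric summation over recursion depth) is routine.
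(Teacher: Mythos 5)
Your proposal follows the recursive ``pointwise'' sparse scheme (fix a cube, peel off stopping children, recurse), which is a genuinely different architecture from the paper's: the paper expands $B_T(f,g)$ into Haar coefficients over a random dyadic grid, sorts the pairs $(P,Q)$ into inside/near/far/neighbor cases, and sums each case with a sparse bound for a complexity-$(u,v)$ bilinear square function (Lemma~\ref{l:uv}). That difference in itself would be fine, but there is a genuine gap in your treatment of the diagonal.

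You handle the ``good-good'' interaction by replacing $f,g$ on $Q\setminus\bigcup Q'$ with bounded multiples of indicators and then claiming the bound follows from \eqref{e:T1}, saying the upgrade from indicators to bounded functions ``is immediate after normalizing.'' It is not. The hypothesis \eqref{e:T1} controls $B_T(\mathbf 1_Q,\phi)$ and $B_T(\phi,\mathbf 1_Q)$ with exactly one free bounded argument; the other argument must be the indicator $\mathbf 1_Q$. What you need on the diagonal is $|B_T(\psi,\tilde\psi)|\lesssim|Q|$ for two arbitrary functions $|\psi|,|\tilde\psi|\le\mathbf 1_Q$, and that estimate is essentially equivalent to the conclusion of the $T1$ theorem, not to its hypothesis. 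Closing this gap is precisely the point of the paper's Second Subcase (the ``crucial term''): after the Haar expansion, the sum over $P$ of $\langle\Delta_Pf\rangle_{P_Q}\mathbf 1_S$ telescopes to $\epsilon_Q\langle|f|\rangle_S\mathbf 1_S$, so one is left with $\langle|f|\rangle_S\,\langle T\mathbf 1_S,\Pi_S^\epsilon g\rangle$ — the first argument of $T$ is now literally an indicator, and the second is a bounded martingale transform of $g$, which is exactly the shape \eqref{e:T1} can handle after conditioning on the stopping $\sigma$-algebra. This also requires a \emph{third} stopping condition (on $\langle|T\mathbf 1_S|\rangle_{S'}$) that your construction does not include; without it $T\mathbf 1_S$ is not controlled in $L^\infty$ on the complement of the stopping children, and the pairing against $\Pi_S^\epsilon g$ cannot be closed. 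So the off-diagonal bookkeeping you flag as the ``main obstacle'' is in fact the routine part; the real obstacle is the diagonal, and your sketch as written does not resolve it.
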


The proof is elementary, using (a) facts about averages and conditional expectations; (b)
random dyadic grids as a convenient tool to reduce the complexity of the argument; (c) 
orthogonality of martingale transforms, and the most sophisticated fact (d) a sparse bound for  a
certain bilinear square function, with complexity, detailed in Lemma~\ref{l:uv}.   In addition, the
testing condition \eqref{e:T1} appears solely in the construction of the stopping times.  
The proof is carried out in \S \ref{s:proof}. There are many terms, organized so that there is one
crucial term, in \S \ref{s:cruical}.  Almost all of the remaining cases use standard off-diagonal
considerations, and the simple argument to prove the sparse bound for a martingale transform.  This
is detailed in \S \ref{s:lemma}.   

The consequences of the sparse bound \eqref{e:T<} are: 
\begin{enumerate}
\item The weak type $ (1,1)$ inequality, and the $ L ^{p}$ inequalities, for $ 1< p < \infty $.
These hold with the sharp dependence upon $ p$.  To wit, using $ \lVert M \;:\; L ^{p} \mapsto L
^{p}\rVert \lesssim p' = \frac p {p-1}$, we have 
\begin{align*}
\Lambda (f,g) &= \int \sum_{S\in \mathcal S} \langle f \rangle_S \langle g \rangle_S \mathbf 1_{S}
\; dx 
 \lesssim \int  \sum_{S\in \mathcal S} \langle f \rangle_S \langle g \rangle_S \mathbf 1_{E_S} \;
dx 
\\
& \leq \int M f \cdot M g \; dx \leq \lVert M  f\rVert_p \lVert Mg\rVert_{p'} \lesssim p \cdot p'
\lVert f\rVert_p \lVert g\rVert_{p'}.
\end{align*}

\item The weighted version of the same, relative to $ A_p$ weights. The dependence upon the $ A_p$
characteristic is sharp, for $ 1< p < \infty $, and the best known for the case of $ p=1$.  See the
arguments in \cite{MR3085756}.  

\item  The  exponential integrability results of Karagulyan \cite{MR1913610,MR3124931}.
 
\end{enumerate}

Our statement of the $ T1$ theorem follows the `testing inequality' approach of the Sawyer two
weight theorems \cites{MR676801,MR930072}, 
and the statement in Stein's monograph \cite{MR1232192}.  Our approach is a descendant of the
radically dyadic approach of Figuel 
\cite{MR1110189}, further influenced by the martingale approach of Nazarov-Treil-Volberg
\cite{MR1998349}.
(Also see \cite{MR2956641}.)
Our use of the stopping cubes follows that of the proof of  the two weight Hilbert transform
estimate \cite{MR3285857}.

The bound by sparse operators has been an active and varied recent research topic.  It had a remarkable success in Lerner's approach to the $ A_2$ bound \cite{MR3085756},  which cleverly  bounded on the weighted norm  of a Calder\'on-Zygmund by a the norm of a sparse operator. 
The pointwise approach first established  in \cite{MR3521084}, with a somewhat different approach in \cite{2015arXiv150105818L}. 
The latter approach  has been studied  from several  different points of view \cites{2015arXiv151207247L,2015arXiv151000973B,2016arXiv160706319D,2016arXiv160603340V}. 
The form approach used here, is however successful in settings where the pointwise approach would fail, most notably the setting of the bilinear Hilbert transform \cite{2016arXiv160305317C},   Bochner Riesz multipliers \cite{2016arXiv160506401B}, and  oscillatory singular integrals \cite{2016arXiv160906364L}. 
The interested reader can consult the papers above for more information and references.  

This paper proves the sparse bound without appealing to any structural theory of Calder\'on-Zygmund operators such as boundedness of maximal truncations, which is the approach started in  \cite{2015arXiv150105818L}. 
The other prominent structural fact one could use is the Hyt\"onen structure theorem \cite{MR2912709}. 
This is the approach followed by  Culiuc-Di Plinio-Ou  \cite{161001958} also using bilinear forms.  They   show that this approach  has  further applications to the matricial setting, avoiding difficulties for the pointwise approach in this setting.

\section{Random Grids} 

All the proofs here will use Hyt\"onen's random dyadic grids from \cite{MR2912709}.  Recall again,
that the standard dyadic grid in $\mathbb R^{d}$ is 
$$
\mathcal D^{0} : = \bigcup_{k \in \mathbb Z} \mathcal D_k, \qquad  \mathcal D_k : = \set{2^k\left([0,1)^{d} + m\right)}{m  \in
\mathbb Z^d}.
$$
For a binary sequence $\omega : = (\omega_j)_{j \in \mathbb Z} \in \left( \{ 0, 1\}^{d} \right)^{\mathbb Z}$ we
define a general dyadic system by
$$
\mathcal D^{\omega} : = \set{Q \dotplus \omega}{ Q \in \mathcal D^{0} }, 
$$
where
$
Q \dotplus \omega = Q + \sum_{j : 2^{-j} < \ell Q} 2^{-j}\omega_j
$.
We consider the standard uniform probability measure on $\{0,1\}^{d}$, that is, it assigns $2^{-d}$
to every point.  We place on $\omega$, the probability measure $\mathbb P$, the corresponding
product measure on $\left( \{ 0, 1\}^{d} \right)^{\mathbb Z}$.  This way, we can see $(\mathcal D_{\omega})$ as a
collection of grids with a random set of parameters $\omega$.    For every $\omega$, these dyadic
grids satisfy the required properties, namely
\begin{enumerate}
\item For $P, Q \in \mathcal D^{\omega}$, $P \cap Q \in \{ P, Q, \emptyset \}$.
\item For fixed $k  \in \mathbb Z$, the collection $\mathcal D^{\omega}_{k} = \set{Q \in \mathcal D^{\omega}}{  \ell Q
= 2^{-k}}$ partitions $\mathbb R^d$.
\end{enumerate}
\begin{defi}[Good-bad intervals] Let $0 < \gamma < 1$ and a positive integer $r$ such that $r
\geq (1- \gamma)^{-1}$.  We say that $Q \in \mathcal D^{\omega}_{k}$ is $r$-bad, if there is
an integer $s \geq r$, and a choice of coordinate, so that the vectors 
$$
\omega_{k + \lfloor(1-\gamma)s\rfloor}, \omega_{k + \lfloor(1-\gamma)s\rfloor + 1}, \ldots ,
\omega_{k + s} \in \{ 0,1 \}^{d},
$$
all agree in that one coordinate.  If $Q$ is not $r$-bad, then it is called $r$-good.
\end{defi}
From now on, we are going to omit the dependence on $r$, and we will refer to the cubes as only good
or bad.   The following lemmas are well known.


\begin{lemma}\label{GeomGood}
If $Q$ is good, then for any cube $P$ with $2^{r} \ell Q < \ell P$ we have
$$
\textup{dist}(Q, \partial P) \gtrsim (\ell Q)^{\gamma} (\ell P)^{1- \gamma},
$$
where the implied constant is absolute.
\end{lemma}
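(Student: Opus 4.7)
The plan is to prove the contrapositive: assume that in some coordinate $\alpha \in \{1,\ldots,d\}$ one has $\textup{dist}_\alpha(Q, \partial P) < c (\ell Q)^\gamma (\ell P)^{1-\gamma}$ for a small absolute constant $c$, and derive that $Q$ is $r$-bad. One may restrict to the case $Q \subset P$ (for $Q$ and $P$ disjoint in $\mathcal{D}^\omega$ the statement is vacuous, since adjacent cubes share a face). Writing $\ell P = 2^s \ell Q$, the hypothesis $2^r \ell Q < \ell P$ gives $s > r$.

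The setup is a dyadic address for $Q$ inside $P$. Descending through the chain of $\mathcal{D}^\omega$-ancestors $Q = Q^{(0)} \subset Q^{(1)} \subset \cdots \subset Q^{(s)} = P$, one records at each level $i$ a digit $v^{(i)} \in \{0,1\}^d$ encoding which of the $2^d$ children of $Q^{(i+1)}$ equals $Q^{(i)}$. In coordinate $\alpha$, the corner-to-corner offset of $Q$ within $P$ is $\ell Q \cdot m_\alpha$ with $m_\alpha = \sum_{i=0}^{s-1} v^{(i)}_\alpha \cdot 2^i$, so that the coordinate-$\alpha$ distance from $Q$ to $\partial P$ equals $\ell Q \cdot \min(m_\alpha,\, 2^s - 1 - m_\alpha)$.

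A binary-expansion and pigeonhole argument follows. The failure of the distance bound forces $\min(m_\alpha,\, 2^s - 1 - m_\alpha) < c \cdot 2^{s(1-\gamma)}$, hence the top $\lceil \gamma s \rceil$ bits of the binary expansion of $m_\alpha$ are all $0$ or all $1$; equivalently, the digits $v^{(i)}_\alpha$ are constant over $i$ in the range $[\lfloor s(1-\gamma) \rfloor,\, s-1]$.

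The final and most delicate step is matching these digits $v^{(i)}_\alpha$ with the components $\omega_{j,\alpha}$ appearing in the $r$-bad definition. Unraveling the formula $Q \dotplus \omega = Q + \sum_{j : 2^{-j} < \ell Q} 2^{-j} \omega_j$ that builds $\mathcal{D}^\omega$ from $\mathcal{D}^0$, the choice of child $v^{(i)}_\alpha$ at level $i$ is controlled, modulo a fixed contribution inherited from $\mathcal{D}^0$, by a single component $\omega_{j,\alpha}$; a careful index chase shows that the range of $i$ maps precisely onto the range $j \in [k + \lfloor (1-\gamma) s \rfloor,\, k+s]$ appearing in the definition of bad. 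Constancy of the $v^{(i)}_\alpha$ thus forces constancy of the $\omega_{j,\alpha}$, contradicting the goodness of $Q$. I expect this index-matching to be the main obstacle: it is routine but requires careful bookkeeping of which $\omega$'s control the shift at each scale, and the implied constant in the final bound is absolute, coming only from rounding in the exponent $s(1-\gamma)$.
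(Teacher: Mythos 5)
The paper itself offers no proof of this lemma (it is stated as ``well known''), so your proposal stands on its own. Your first two steps are fine: reducing to $Q\subset P$, writing the offset of $Q$ in $P$ as $m_\alpha\,\ell Q$ with $m_\alpha=\sum_{i=0}^{s-1}v^{(i)}_\alpha 2^i$, and observing that $\min(m_\alpha,2^s-1-m_\alpha)<2^{\lfloor(1-\gamma)s\rfloor}$ forces the digits $v^{(i)}_\alpha$ to be constant for $i\in[\lfloor(1-\gamma)s\rfloor,s-1]$ is a correct pigeonhole. The gap is exactly at the step you flagged as ``routine bookkeeping,'' and it is not routine: the claimed correspondence between the child-selection digits $v^{(i)}_\alpha$ and the components $\omega_{j,\alpha}$, ``modulo a fixed contribution inherited from $\mathcal D^0$,'' is false. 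Since $\dotplus$ is genuine translation by a real number, if $Q=2^{-k}([0,1)^d+m_0)$ then the position of $Q\dotplus\omega$ inside its $s$-fold $\mathcal D^\omega$-ancestor is, in each coordinate, $(m_{0,\alpha}-N_\alpha)\bmod 2^s$ with $N_\alpha=\sum_{i=0}^{s-1}2^i\omega_{k-i,\alpha}$. Subtraction mod $2^s$ propagates borrows, so $v^{(i)}_\alpha$ equals $\omega_{k-i,\alpha}$ XOR a bit that depends on the \emph{other} $\omega$'s, not a fixed bit; constancy of the $v^{(i)}_\alpha$ does not imply constancy of the $\omega_{j,\alpha}$. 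Concretely, in $d=1$ take $Q=[1,2)$ (so $m_0=1$, $k=0$), $s=3$, and $\omega$'s with $N=2$: the position is $(1-2)\bmod 8=7$, so $Q\dotplus\omega$ touches $\partial P$, yet the binary digits of $N=010_2$ are not constant in the top places. The near-boundary event confines $N_\alpha$ to an interval of length $\sim 2^{(1-\gamma)s}$ \emph{centered at $m_{0,\alpha}$} mod $2^s$, whereas the digit-agreement event is that interval centered at $0$; these coincide only when $m_{0,\alpha}\equiv 0$.

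The upshot is that the lemma cannot be derived from the literal digit-agreement definition of badness by digit matching, and you should not expect an index chase to rescue it. The standard resolution, which is what the cited sources actually do, is to take badness to be the geometric (NTV) condition ``$\textup{dist}(Q,\partial P)\le(\ell Q)^\gamma(\ell P)^{1-\gamma}$ for some ancestor $P$ with $\ell P\ge 2^r\ell Q$,'' under which Lemma~\ref{GeomGood} is immediate (your steps 1--2 already prove it), and then to prove Lemma~\ref{GoodIsLikely} directly: for fixed $s$ and $\alpha$, the formula $(m_{0,\alpha}-N_\alpha)\bmod 2^s$ shows the near-boundary event is an interval of length $\lesssim 2^{\lfloor(1-\gamma)s\rfloor+1}$ for the uniformly distributed variable $N_\alpha\in\mathbb Z/2^s\mathbb Z$, hence has probability $\lesssim 2^{-\gamma s}$, and one sums over $s\ge r$ and $\alpha$. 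If you insist on the $\omega$-digit definition, you must either build $m_0$ into it or accept that the implication you are trying to prove fails; either way, the proof you propose needs to be reorganized around the geometric definition rather than the digit-agreement one.
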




\begin{lemma}\label{GoodIsLikely}
Fix $0 < \gamma < 1$ and $r > \gamma^{-1}$, then, there is a constant $C_d$ such that
$$
\mathbb P(\textup{$Q$ is good}) \geq 1 - C_d \gamma^{-1} 2^{-\gamma r}.
$$ 
\end{lemma}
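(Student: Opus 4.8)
The target is Lemma~\ref{GoodIsLikely}: estimating $\mathbb P(Q \text{ is bad})$ from above. The plan is to union-bound over the two sources of badness in the definition — the choice of scale $s \geq r$ and the choice of a coordinate direction $i \in \{1,\dots,d\}$ — and then, for each fixed $s$ and $i$, compute the probability that all of $\omega_{k+\lfloor(1-\gamma)s\rfloor}, \dots, \omega_{k+s}$ agree in the $i$-th coordinate. Since the $\omega_j$ are independent and each $i$-th coordinate is an independent fair coin, the event that coordinates $j = k + \lfloor(1-\gamma)s\rfloor, \dots, k+s$ all share a common value in slot $i$ has probability $2 \cdot 2^{-(N_s)}$ where $N_s = s - \lfloor (1-\gamma)s\rfloor + 1$ is the number of coordinates involved; that is, probability $2^{-(N_s - 1)} = 2^{1 - N_s}$.

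Next I would estimate $N_s$. We have $N_s = s - \lfloor(1-\gamma)s\rfloor + 1 \geq s - (1-\gamma)s + 1 = \gamma s + 1$, so $2^{1-N_s} \leq 2^{-\gamma s}$. Then summing over the $d$ coordinate choices and over $s \geq r$:
\begin{equation*}
\mathbb P(Q \text{ is bad}) \leq \sum_{s \geq r} d \cdot 2^{-\gamma s} = d \cdot \frac{2^{-\gamma r}}{1 - 2^{-\gamma}}.
\end{equation*}
Finally I would absorb the geometric-series denominator: since $\gamma < 1$ one has $1 - 2^{-\gamma} \geq c\gamma$ for an absolute constant $c$ (from $1 - 2^{-t} \geq (\ln 2) t / 2$ on $[0,1]$, say, or just $1-2^{-\gamma}\ge \gamma(1-2^{-1})=\gamma/2$ by concavity), giving $\mathbb P(Q \text{ is bad}) \leq C_d \gamma^{-1} 2^{-\gamma r}$ with $C_d$ proportional to $d$, which is exactly the claimed bound after taking complements.

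The only mild subtlety — the "main obstacle," though it is minor — is bookkeeping with the floor function to make sure the count $N_s$ of indices in the block is at least $\gamma s + 1$ rather than something weaker, and making sure the hypothesis $r > \gamma^{-1}$ (hence $r \geq 1$, and the block is nonempty and the union-bound over $s\ge r$ actually starts in a regime where $\lfloor(1-\gamma)s\rfloor < s$) is used where needed. One should also note the independence structure is exactly what makes the per-$(s,i)$ probability a clean power of $2$; no further probabilistic input is required. Everything else is a geometric-series summation.
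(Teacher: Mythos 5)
Your argument is correct and is the standard proof. The paper itself does not prove Lemma~\ref{GoodIsLikely}; it simply remarks that the lemmas about random dyadic grids are well known (they originate with Nazarov--Treil--Volberg and Hyt\"onen), so there is no in-paper proof to compare against. Your union bound over scales $s\geq r$ and coordinates $i\in\{1,\dots,d\}$, the count $N_s = s-\lfloor(1-\gamma)s\rfloor+1\geq \gamma s+1$, the per-event probability $2^{1-N_s}\leq 2^{-\gamma s}$, the geometric summation, and the elementary estimate $1-2^{-\gamma}\geq \gamma/2$ on $(0,1)$ (by concavity) together give exactly $\mathbb P(Q\ \text{bad})\leq 2d\,\gamma^{-1}2^{-\gamma r}$, hence the claim with $C_d=2d$. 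One small housekeeping point worth making explicit: the standing assumption $r\geq(1-\gamma)^{-1}$ from the definition ensures $\lfloor(1-\gamma)s\rfloor\geq 1$ for all $s\geq r$, so the block of indices indeed lies strictly below scale $k$ and is nonempty; the hypothesis $r>\gamma^{-1}$ is not actually used in the estimate itself, it merely ensures the resulting bound is in a useful range.
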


For an arbitrary dyadic grid $\mathcal D^{\omega}$, every function $f \in L^{2}(\mathbb R^d)$ admits an
orthogonal decomposition
$$
f  = \sum_{Q \in \mathcal D^{\omega}} \Delta_Q f.
$$
Given a dyadic grid $\mathcal D^{\omega}$, we define the good and bad projections as
$$
P^{\textup{\tiny bad}}_{\omega} f : = \sum_{\substack{Q \in \mathcal D^{\omega} \\ \textup{$Q$ is bad}}}
\Delta_Q f, \qquad P^{\textup{\tiny good}}_{\omega} f : = \sum_{\substack{Q \in \mathcal D^{\omega} \\
\textup{$Q$ is good}}} \Delta_Q f.
$$
The following lemma says that in average, the bad projections tend to be small.

\begin{lemma}\label{BadProjSmall}
For all $1  < p < \infty$ there is an $\epsilon_p > 0$ such that for all $0 < \gamma < 1$ and $r >
\gamma^{-1}$ we have
$$
\mathbb E _{\omega} \| P^{\textup{\tiny bad}}_{\omega} f  \|_{L^p}^p \lesssim 2^{-\epsilon_p r} \| f
\|_{L^p}^p.
$$
\end{lemma}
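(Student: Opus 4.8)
The plan is to reduce the estimate to the Hilbertian case $p=2$, where it follows from the orthogonality of the martingale differences together with Lemma~\ref{GoodIsLikely}, and then to recover every exponent $1<p<\infty$ by interpolating against a trivial bound.

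First I would record that trivial bound. For each fixed $\omega$ the operator $P^{\textup{\tiny bad}}_{\omega}=\sum_{Q\ \textup{bad}}\Delta_Q$ is a martingale transform whose multiplier sequence takes values in $\{0,1\}$ (and which is predictable, since the badness of a cube is governed by coarser scales); by Burkholder's inequality it is bounded on $L^q(\mathbb R^d)$ for every $1<q<\infty$, with operator norm $C_q$ depending only on $q$ and $d$ and independent of $\omega$. Writing $Tf$ for the function $(x,\omega)\mapsto(P^{\textup{\tiny bad}}_{\omega}f)(x)$, one has $\|Tf\|_{L^q(dx\times d\mathbb P)}^q=\mathbb E_{\omega}\|P^{\textup{\tiny bad}}_{\omega}f\|_q^q$, so $T$ is a linear map of norm at most $C_q$ from $L^q(dx)$ into $L^q(dx\times d\mathbb P)$ for every $q\in(1,\infty)$. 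Once the estimate $\|T\|_{L^2(dx)\to L^2(dx\times d\mathbb P)}\le(C_d\,\gamma^{-1}2^{-\gamma r})^{1/2}$ is in hand, Riesz--Thorin interpolation between the exponent $2$ and an exponent $q$ chosen on the same side of $2$ as $p$ delivers $\mathbb E_{\omega}\|P^{\textup{\tiny bad}}_{\omega}f\|_p^p\lesssim 2^{-\epsilon_p r}\|f\|_p^p$ with some $\epsilon_p>0$ (it will depend on $p$, $\gamma$, $d$); it is enough to argue for $f\in L^2\cap L^p$ and then invoke density.

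The heart of the matter is therefore $p=2$. Here I would expand by orthogonality and Tonelli,
\[
\mathbb E_{\omega}\|P^{\textup{\tiny bad}}_{\omega}f\|_2^2=\mathbb E_{\omega}\!\!\sum_{\substack{Q\in\mathcal D^{\omega}\\ Q\ \textup{bad}}}\!\!\|\Delta_Q f\|_2^2=\sum_{Q_0\in\mathcal D^{0}}\mathbb E_{\omega}\Big[\mathbf 1_{\{Q_0\dotplus\omega\ \textup{bad}\}}\,\|\Delta_{Q_0\dotplus\omega}f\|_2^2\Big],
\]
after reindexing the cubes of $\mathcal D^{\omega}$ by the reference lattice $\mathcal D^{0}$, and then use independence to factor each expectation as $\mathbb P(Q_0\dotplus\omega\ \textup{bad})\cdot\mathbb E_{\omega}\|\Delta_{Q_0\dotplus\omega}f\|_2^2$: for $Q_0$ of a fixed generation the location of $Q_0\dotplus\omega$ and of its dyadic children — hence $\Delta_{Q_0\dotplus\omega}f$ — is a function of the coordinates $\omega_j$ in one block of scales, whereas the event that $Q_0\dotplus\omega$ is bad depends only on the $\omega_j$ in a disjoint block of coarser scales, and the $\omega_j$ are independent. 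Since $\mathbb P(Q_0\dotplus\omega\ \textup{bad})=1-\mathbb P(Q_0\dotplus\omega\ \textup{good})\le C_d\gamma^{-1}2^{-\gamma r}$ uniformly in $Q_0$ by Lemma~\ref{GoodIsLikely}, summing back up gives
\[
\mathbb E_{\omega}\|P^{\textup{\tiny bad}}_{\omega}f\|_2^2\le C_d\gamma^{-1}2^{-\gamma r}\,\mathbb E_{\omega}\Big[\sum_{Q\in\mathcal D^{\omega}}\|\Delta_Q f\|_2^2\Big]=C_d\gamma^{-1}2^{-\gamma r}\|f\|_2^2 ,
\]
which is the desired $L^2$ bound; one may take $\epsilon_2=\gamma/2$ after absorbing the factor $\gamma^{-1}$ via $\gamma^{-1}2^{-\gamma r}\le 2^{-\gamma r/2}$ for $r\ge\gamma^{-1}$.

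The step I expect to require genuine care — everything else being routine — is the independence used to factor the expectations in the $p=2$ argument: one must read off from Hyt\"onen's construction exactly which coordinates $\omega_j$ fix the location of $Q_0\dotplus\omega$ and of its children, and verify that ``$Q_0\dotplus\omega$ is bad'' is measurable with respect to a family of coordinates disjoint from these, so that the expectation of the product splits into a product of expectations. Granting this, the orthogonality identity, the geometric series hidden in Lemma~\ref{GoodIsLikely}, and the interpolation are all standard.
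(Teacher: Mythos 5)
The paper does not prove this lemma; it records Lemmas~\ref{GeomGood}--\ref{GoodIsEnough} with the remark that they are ``well known'' and points to Hyt\"onen's random dyadic grids \cite{MR2912709} and the Nazarov--Treil--Volberg framework \cite{MR1998349}. Your proposal reconstructs what is in fact the standard proof: expand $\|P^{\textup{\tiny bad}}_{\omega}f\|_2^2$ by orthogonality, reindex over the reference lattice $\mathcal D^0$, split $\mathbb E_{\omega}\big[\mathbf 1_{\{Q_0\dotplus\omega\ \textup{bad}\}}\|\Delta_{Q_0\dotplus\omega}f\|_2^2\big]$ by independence of the relevant blocks of coordinates, invoke Lemma~\ref{GoodIsLikely}, and then get from $p=2$ to general $p$ by Riesz--Thorin against the trivial Burkholder bound $\|P^{\textup{\tiny bad}}_{\omega}\|_{L^q\to L^q}\le C_q$ uniform in $\omega$. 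This is correct and is the argument a reader would be expected to supply.

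The one place you rightly flag as needing care is the independence, and indeed it is the crux and the place where the paper's own displayed definition of badness appears to have an index slip. With $\ell Q_0 = 2^{-k}$, the location of $Q_0\dotplus\omega = Q_0 + \sum_{j>k}2^{-j}\omega_j$ (and of its dyadic children, which are obtained by subdividing $Q_0\dotplus\omega$) is a function of $(\omega_j)_{j>k}$. On the other hand, badness of $Q\in\mathcal D^{\omega}_k$ is a statement about the distance from $Q$ to the boundary of much larger cubes $R\in\mathcal D^{\omega}$ with $\ell R = 2^{-(k-s)}$, $s\ge r$; the relative position of $Q$ inside $R$ depends on $(\omega_j)_{k-s<j\le k}$, i.e.\ on coordinates at the \emph{coarser} scales $j\le k$. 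These two blocks are disjoint, which is exactly what licenses $\mathbb E[XY]=\mathbb E[X]\,\mathbb E[Y]$ in your computation. As written, however, the paper's definition of $r$-bad names the coordinates $\omega_{k+\lfloor(1-\gamma)s\rfloor},\ldots,\omega_{k+s}$, which are at the \emph{finer} scales $j>k$ and hence overlap with those determining the location of $Q_0\dotplus\omega$; taken literally this would defeat the factorization. Your proof proceeds under the (correct) Hyt\"onen interpretation; it is worth saying explicitly that the paper's index convention should read $\omega_{k-s},\ldots,\omega_{k-\lfloor(1-\gamma)s\rfloor}$ or equivalent, else one must replace independence by the conditional estimate $\mathbb P\big(Q_0\dotplus\omega\ \textup{bad}\ \big|\ (\omega_j)_{j>k}\big)\le C_d\gamma^{-1}2^{-\gamma r}$, which only holds with the correct convention.

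A minor arithmetic point: the remark that ``$\gamma^{-1}2^{-\gamma r}\le 2^{-\gamma r/2}$ for $r\ge\gamma^{-1}$'' is not literally true for small $\gamma$ (it would require $r\gtrsim\gamma^{-1}\log\gamma^{-1}$). This does no damage: either absorb $\gamma^{-1}$ into the implied constant, which the lemma's $\lesssim$ permits, or note, as you do elsewhere, that $\epsilon_p$ is allowed to depend on $\gamma$ as well as $p,d$. With that understood, the interpolation step is clean, and the linearity of $f\mapsto\big((x,\omega)\mapsto P^{\textup{\tiny bad}}_{\omega}f(x)\big)$ makes Riesz--Thorin apply directly on $L^p(dx)\to L^p(dx\times d\mathbb P)$.
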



Using this lemma, we can prove that it is enough to estimate bounds only for {\it good} functions,
in the following sense

\begin{lemma}\label{GoodIsEnough}
Let $1 <  p < \infty$.  If $T : L^p \mapsto  L^p$ is a bounded operator.  If $0 < \gamma < 1$ is fixed
and $r > C(1 + \log \frac{1}{\gamma})$, then
$$
\|  T :  L^p \mapsto L^p  \| \leq 4 M,
$$
where $M$ is the best constant in the inequality
$$
\mathbb E _{\omega}|\inn{T P^{\textup{\tiny good}}_{\omega} f}{ P^{\textup{\tiny good}}_{\omega} g}| \leq
M \| f \|_{L^p} \| g \|_{L^{p'}}.
$$
\end{lemma}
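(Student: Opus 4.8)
\textbf{Proof proposal for Lemma~\ref{GoodIsEnough}.}
The plan is to run the standard averaging trick: bound the deterministic operator norm of $T$ by the expectation of its good-part bilinear form, at the cost of an error controlled by the bad projections. Fix $f \in L^p$ and $g \in L^{p'}$ with $\|f\|_{L^p} = \|g\|_{L^{p'}} = 1$, and for each $\omega$ write $f = P^{\textup{\tiny good}}_{\omega} f + P^{\textup{\tiny bad}}_{\omega} f$ and similarly for $g$. Expanding $\inn{Tf}{g}$ and taking expectations,
\begin{equation*}
\inn{Tf}{g} = \mathbb E_{\omega} \inn{T P^{\textup{\tiny good}}_{\omega} f}{ P^{\textup{\tiny good}}_{\omega} g} + \mathbb E_{\omega} \inn{T P^{\textup{\tiny good}}_{\omega} f}{ P^{\textup{\tiny bad}}_{\omega} g} + \mathbb E_{\omega} \inn{T P^{\textup{\tiny bad}}_{\omega} f}{ g},
\end{equation*}
where I have used that $\inn{Tf}{g}$ is independent of $\omega$ so it equals its own expectation, and then split the second argument first and the first argument second. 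The first term is at most $M$ in absolute value by hypothesis. For the two error terms I would use boundedness of $T$ on $L^p$ together with H\"older and Lemma~\ref{BadProjSmall}: for instance $|\mathbb E_{\omega} \inn{T P^{\textup{\tiny good}}_{\omega} f}{ P^{\textup{\tiny bad}}_{\omega} g}| \le \|T\| \, \mathbb E_{\omega}[\|P^{\textup{\tiny good}}_{\omega} f\|_{L^p} \|P^{\textup{\tiny bad}}_{\omega} g\|_{L^{p'}}] \le \|T\| \, (\mathbb E_{\omega}\|P^{\textup{\tiny good}}_{\omega} f\|_{L^p}^p)^{1/p} (\mathbb E_{\omega}\|P^{\textup{\tiny bad}}_{\omega} g\|_{L^{p'}}^{p'})^{1/p'} \lesssim \|T\| \cdot 2^{-\epsilon_{p'} r / p'}$, since $\|P^{\textup{\tiny good}}_{\omega} f\|_{L^p} \le \|f\|_{L^p} + \|P^{\textup{\tiny bad}}_{\omega}f\|_{L^p} \lesssim 1$. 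The third term is handled the same way using the bad bound for $f$.

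Collecting the estimates gives $|\inn{Tf}{g}| \le M + C\|T\| \cdot 2^{-\epsilon r}$ for a suitable $\epsilon = \epsilon(p) > 0$ and all $r$ large. Taking the supremum over $f,g$ of unit norm yields $\|T : L^p \mapsto L^p\| \le M + C \|T\| \, 2^{-\epsilon r}$. Now choose $r$ large enough — this is where the hypothesis $r > C(1 + \log\frac1\gamma)$ enters, absorbing also the $\gamma$-dependence hidden in the constants of Lemmas~\ref{GoodIsLikely} and~\ref{BadProjSmall} — so that $C \|T\| \, 2^{-\epsilon r} \le \tfrac12 \|T : L^p \mapsto L^p\|$; since $\|T\| = \|T : L^p \mapsto L^p\|$ is finite by assumption, it can be subtracted from both sides, giving $\tfrac12 \|T\| \le M$, i.e. $\|T : L^p \mapsto L^p\| \le 2M \le 4M$. (The slack between $2M$ and $4M$ leaves room for not tracking constants precisely.)

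The one genuinely delicate point is the self-improvement/absorption step: it is legitimate only because $\|T : L^p \to L^p\|$ is assumed \emph{a priori finite}, so that subtracting a fraction of it from both sides is not vacuous — this is exactly why the hypothesis states "$T : L^p \mapsto L^p$ is a bounded operator." I would make sure to flag this. Everything else is routine: the expansion into good/bad pieces, H\"older's inequality, and the quantitative decay from Lemma~\ref{BadProjSmall}; the precise form of the threshold on $r$ just records how large $r$ must be in terms of $p$ and $\gamma$ for the absorption to go through, and no sharpness is claimed there.
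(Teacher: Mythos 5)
The paper states Lemma~\ref{GoodIsEnough} as a known fact (Nazarov--Treil--Volberg / Hyt\"onen) and gives no proof, so there is nothing internal to compare against; but your argument is the standard one and it is correct. The decomposition $\inn{Tf}{g}=\mathbb E_\omega\inn{TP^{\textup{\tiny good}}_\omega f}{P^{\textup{\tiny good}}_\omega g}+\mathbb E_\omega\inn{TP^{\textup{\tiny good}}_\omega f}{P^{\textup{\tiny bad}}_\omega g}+\mathbb E_\omega\inn{TP^{\textup{\tiny bad}}_\omega f}{g}$ is a valid telescoping (though your parenthetical "split the second argument first and the first argument second" has the order reversed relative to the displayed identity---you in fact split $f$ first and then $g$ inside the good piece; this is purely a narrative slip), the two error terms are correctly estimated via $\|T\|$, H\"older in $\omega$, the a priori bound $\|P^{\textup{\tiny good}}_\omega f\|_p\lesssim\|f\|_p$, and Lemma~\ref{BadProjSmall}, and the absorption step is valid precisely because $\|T:L^p\to L^p\|$ is assumed finite---a point you correctly flag as the crux. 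The factor $4$ in the statement is indeed just slack beyond the $2$ your computation yields. One small caveat worth being aware of, though it is an imprecision in the paper's formulation rather than in your argument: Lemma~\ref{BadProjSmall} is stated under the hypothesis $r>\gamma^{-1}$, which for small $\gamma$ is not implied by $r>C(1+\log\frac1\gamma)$ with $C$ absolute; since $\gamma$ is declared fixed, one should read $C=C(\gamma,p,d)$, which is what your phrase "absorbing also the $\gamma$-dependence" implicitly does.
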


\section{The Proof of the Sparse Bound} \label{s:proof}

As a consequence of Lemma~\ref{GoodIsEnough}, it is enough for the remainder of the argument to
show this:
There is a choice of constant $ C>1$, so that for all $ f$ and $ g$ compactly supported, and almost
all 
grids $ \mathcal D^\omega  $, there is a sparse operator $ \Lambda =
\Lambda _{f,g, \mathcal D^\omega  , }$ so that 
\begin{equation}\label{e:TsparseGoal}
\lvert \langle T P^{\textup{\tiny good}} f, P^{\textup{\tiny good}} g \rangle\rvert 
\leq C \Lambda (|f|,|g|). 
\end{equation}
In view of the Lemma~\ref{l:sparse<}, the random sparse operator above can be replaced by a
deterministic one. 
Averaging over choices of grid will complete the proof.

Almost all random dyadic grids have the property that the functions $ f, g$ are supported on a
single good dyadic cube.  And, hence, on a sequence of dyadic cubes which exhaust $ \mathbb R
^{n}$. 
This  fact  and goodness are the only facts  about  random grids utilized, so we suppress the $
\omega $ dependence
below.   The inner product in \eqref{e:TsparseGoal} is  expanded 
\begin{equation}  \label{e:GG}
\langle T P^{\textup{\tiny good}} f, P^{\textup{\tiny good}} g \rangle 
= \sum_{\substack{P\in \mathcal D\\ \textup{$ P$ is good}}} 
\sum_{\substack{Q\in \mathcal D\\ \textup{$ Q$ is good}}} 
\langle T \Delta _{P} f , \Delta _{Q} g\rangle. 
\end{equation}
We will further only consider the case of $ \ell P \geq \ell Q$, the reverse case being addressed by
duality.  
The fact that $ P$ and $ Q$ are good will be suppressed, but always referenced when it is
used. 
And, by $ Q\Subset P$ we will mean that $ Q\subset P$ and $ 2 ^{r} \ell Q \leq \ell P$. Goodness of
$ Q$ then implies 
that 
\begin{equation}\label{e:good}
\textup{dist} (Q,   \textup{skel}P) \geq  (\ell Q) ^{\epsilon } (\ell P) ^{1- \epsilon }, 
\end{equation}
where $ \textup{skel} P$ is the union of $ \partial P'$, where $ P'$ is a child of $ P$.  
We will likewise suppress the role of the dyadic grid in our notation.  

As just mentioned, the two functions  $ f,g$ are supported on a single good cube $ P_0 \in  \mathcal
D$,
which we can take to be very large.   Therefore, we can
restrict the sum in \eqref{e:GG} to 
only cubes $ P, Q \subset P_0$.  The bound we
obtain will be independent of the choice of $ P_0$.  
The sum we consider is then broken into several subcases. 
\begin{align}
\sum_{\substack{P \;:\;  P\subset P_0}}  &
\sum_{\substack{Q \;:\;  Q\subset P_0 \\ \ell P \geq \ell Q}} \langle T \Delta _{P} f ,
\Delta _{Q} g\rangle
\\  \label{e:inside}
&= 
\sum_{\substack{P\;:\;   P\subset P_0}}  
\sum_{\substack{Q \;:\;   Q\Subset P}} \langle T \Delta _{P} f , \Delta _{Q} g\rangle 
&& \textup{(inside)} 
\\ \label{e:near} 
& \quad +\sum_{\substack{P\;:\;   P\subset P_0}} 
\sum_{\substack{Q\;:\; 2 ^{r} \ell Q \leq \ell P \\  Q\subset 3P \setminus P  }} \langle
T \Delta _{P} f , \Delta _{Q} g\rangle  && \textup{(near)} 
\\ \label{e:far}
& \quad +\sum_{\substack{P\;:\;   P\subset P_0}} 
\sum_{\substack{Q\;:\; \ell Q \leq \ell P \\  Q\cap 3P  = \emptyset   }} \langle T
\Delta _{P} f , \Delta _{Q} g\rangle  && \textup{(far)} 
\\ \label{e:neighbor} 
& \quad +\sum_{\substack{P\;:\;  P\subset P_0 }}  
\sum_{\substack{Q\;:\; \ell Q \leq \ell P \leq  2 ^{r } \ell Q \\ Q\cap 3P  \neq \emptyset
}} \langle T \Delta _{P} f , \Delta _{Q} g\rangle.  && \textup{(neighbors)} 
\end{align} 

\subsection{Stopping Cubes}
We define a sparse collection $\mathcal S$ of stopping cubes, and associated stopping values in the
following way:   Add $P_0$ to the
collection $\mathcal S$,  and set $ \sigma _{f} (P_0) = \langle \lvert  f\rvert  \rangle_{P_0} $, and
similarly for $ g$.  
In the recursive stage of the construction, for minimal $ S\in \mathcal S$,   define three sets 
\begin{itemize}
\item $F^1_S = \bigcup \set{S' \in \mathcal D(S)}{\ave{|f|}_{S'} > C_0  \sigma _{f} (S),\  S' \mbox{
maximal}}.$
\item $F^2_S = \bigcup \set{S' \in \mathcal D(S)}{\ave{|g|}_{S'} > C_0 \sigma _{g} (S),\ S' \mbox{
maximal}}.$
\item $F^3_S =   \bigcup \set{S' \in \mathcal D(S)}{  \langle \lvert  T \mathbf 1_{S}\rvert  \rangle_{S'} >
C_0 \mathbf T ,\  S' \mbox{ maximal}}.$
\end{itemize}
Let $F_S = F^1_S \cup F^2_S  \cup F ^{3}_S$, and $\mathcal F_S$ be the family of dyadic components
of $F_S$.   The weak-type bound for the dyadic maximal function  and the testing condition
\eqref{e:T1} implies that  there exists $C_0$
big enough, such that $|F_S| < \frac{1}{2}|S|$.  Recursively, add,
every $\mathcal F_S$ to the collection $\mathcal S$ to form a sparse collection.

 We set $ P ^{\sigma }$ to be the smallest stopping cube $ S$ that contains $ P$. And we set 
 $ Q ^{\tau }$ to be the smallest stopping cube $ S$ such that $ Q \Subset S$.  
  The Haar projection associated to $ S$ is  $ \Pi_ S g = \sum_{Q \;:\; Q ^{\tau } =S} \Delta _Q g$.

\subsection{The Inside Terms} \label{s:cruical}
We turn our attention to the main term, that of \eqref{e:inside}, for which there are three
subcases.  
The argument of $ T$ is $ \Delta _P f$, which we write as 
\begin{align} 
\Delta _P f & =\Delta _P f \mathbf 1_{P \setminus P_Q} +   \mathbf 1_{P_Q}  \Delta _P f 
\\ \label{e:insideSplit}
& = 
\Delta _P f \mathbf 1_{P \setminus P_Q} + \langle \Delta _P f \rangle_{P_Q} \cdot 
\begin{cases}
  \mathbf 1_{S} -    \mathbf 1_{S \setminus P_Q }   &  S = Q ^{\tau } \supset P_Q 
 \\
\mathbf 1_{S} +\mathbf 1_{P_Q \setminus S}   &  S = Q ^{\tau } \subsetneq  P_Q 
\end{cases}
\end{align}
where $ Q\Subset P$, and $ P_Q$ is the child of $ P$ that contains $ Q$.  

\subsubsection*{First Subcase}
Control the first term on the right in \eqref{e:insideSplit}  by  off-diagonal considerations.   Central
to all of these off-diagonal arguments are the class of forms $ B ^{u,v}$ defined in \eqref{e:uv},
which are in turn bounded by Lemma~\ref{l:uv}.

Since $ Q$ is a good cube,  the inequality \eqref{e:good} holds: That is $ Q$ is a long way from the
skeleton of $ P$. 
By \eqref{e:ODE},  we have 
\begin{align} 
\lvert \langle T (\Delta _P f \mathbf 1_{P \setminus P_Q}) , \Delta _Q g\rangle\rvert 
&\lesssim P _{\eta } ( \Delta _P f \mathbf 1_{P \setminus P_Q} ) (Q) \lVert \Delta _Q g\rVert_1 
\\ \label{e:PQ1}
&\lesssim [ \ell Q/ \ell P] ^{\eta ' }  \langle  \lvert  \Delta _P f\rvert  \rangle_P \lVert \Delta
_Q g\rVert_1 . 
\end{align}

Using the notation of \eqref{e:uv},  for integers $ v \geq r$, we have 
\begin{align*}
\sum_{P} \sum_{ \substack{ Q \;:\; Q\subset P \\    2 ^{v} \ell Q = \ell P}} 
\lvert \langle T (\Delta _P f \mathbf 1_{P \setminus P_Q}) , \Delta _Q g\rangle\rvert 
\lesssim 2 ^{- \eta ' v} B ^{0,v} (f,g)
\end{align*}
and by Lemma~\ref{l:uv}, this is in turn dominated by a choice of sparse form. Sparse forms are
again
dominated by a fixed form. We can sum this estimate over $ v \geq r$, so this case is complete. 

\subsubsection*{Second Subcase}
We turn attention to the second term in \eqref{e:insideSplit}, in which we have $ \langle \Delta _P
f \rangle _{P_Q} \mathbf 1_{S}$.   
This is the most intricate step, in that we combine several elementary steps.
The bound we prove is  uniform over a choice of $ S\in \mathcal S$.  Namely, 
\begin{equation} \label{e:InB}
 \begin{split}
  \Bigl\lvert 
 \sum_{ \substack{Q  \;:\;   Q^{\tau }  =S }}
\   \sum_{P  \;:\; Q \Subset P }  
 \langle T (\Delta _P f \cdot  \mathbf 1_{S}), \Delta _Q g \rangle 
 \Bigr\rvert  \lesssim  \langle \lvert  f\rvert \rangle_S \langle \lvert  g\rvert  \rangle_S \lvert 
S\rvert 
  \end{split} 
\end{equation}
This is the one point in the argument in which the implied constant depends upon the testing
constant $ \mathbf T$ in \eqref{e:T1}.

For each cube $Q$  with $ Q ^{\tau }=S$, define $\epsilon_Q$ by
\begin{equation}\label{e:eps}
\epsilon_Q \ave{|f|}_{S} : = \sum_{\substack{P \in \mathcal D,\ Q \Subset P_Q  }} 
\ave{\Delta_P f}_{P_Q}.
\end{equation}
By the first stopping condition, corresponding to the control of the averages of $ f$, $\{ \epsilon_Q
\}_{Q \in \mathcal D}$ is
uniformly bounded.   In particular, this operator is a martingale transform.  
\begin{equation*}
\Pi ^{\epsilon }_S g  = \sum_{Q \;:\; Q ^{\tau }=S}  \varepsilon _Q \Delta _Q g. 
\end{equation*}

We make the following observation about the second stopping condition, 
corresponding to the control of the averages of $ g$.  
Setting a conditional expectation on $ S$ to be 
\begin{equation*}
\mathbb E (\phi  \;\vert\; \mathcal F _S ) = 
\begin{cases}
\phi (x)    & x \in S \setminus F_s 
\\
\langle \phi  \rangle_ {S'}  &  x\in S',\  S'\in \mathcal F_S 
\end{cases}
\end{equation*}
Then, $ \lVert  \mathbb E (g \mathbf 1_{S} \;\vert \; \mathcal F_S ) \rVert _{\infty } \lesssim
\langle \lvert  g\rvert  \rangle_S $.  
We also have $ \Pi ^{\epsilon }_S g = \Pi ^{\epsilon }_S \mathbb E (g \mathbf 1_{S} \;\vert \;
\mathcal F_S ) $. 
Therefore, by the $ L ^2 $ bound for martingale transforms,  
\begin{equation}\label{e:Pi2}
\lVert \Pi ^{\epsilon }_S g\rVert_2 
\leq \lVert \mathbb E (g \mathbf 1_{S} \;\vert \; \mathcal F_S ) \rVert_2 \lesssim \langle \lvert 
g\rvert  \rangle_S \lvert  S\rvert ^{1/2} .  
\end{equation}

The point of our third stopping condition, corresponding to the control of the average of $ T
\mathbf 1_{S}$,  is that $  \mathbb E (T \mathbf 1_{S } \; \vert \;  \mathcal F_S)$ is bounded in $ L ^{\infty
}$ by a constant multiple of $ \mathbf T $.  Collecting these observations, we can rewrite our sum
as below, in which in the first step we use the definition \eqref{e:eps} to collapse the sum over $
P$. 
\begin{align}
 \textup{LHS of \eqref{e:InB}}
&= \lvert  \langle |f|  \rangle_S  \langle T \mathbf 1_{S}  , \Pi ^{\epsilon }_S g  \rangle \rvert 
 \\
 & =  
  \lvert  \langle |f|  \rangle_S  \langle T \mathbf 1_{S}  , \mathbb E (\Pi ^{\epsilon }_Sg\;\vert \;
\mathcal F_S)\rangle \rvert 
 \\
  & =  
  \lvert  \langle |f|  \rangle_S  \langle\mathbb E (T \mathbf 1_{S } \; \vert \;  \mathcal F_S) , \Pi
^{\epsilon }_S g\rangle \rvert 
  \\ 
 & \lesssim \langle |f|  \rangle_S   
 \lVert \mathbb E (T \mathbf 1_{S } \; \vert \;  \mathcal F_S) \rVert_2 
 \lVert  \Pi ^{\epsilon }_S
g\rVert_2 
 \label{e:InC}
 \lesssim \langle |f|  \rangle_S \langle |g| \rangle_S \lvert  S\rvert.  
\end{align}
This completes this case. 

\subsubsection*{Third Subcase} 
We address the top alternative in \eqref{e:insideSplit}, namely we bound 
\begin{equation}\label{e:3rd}
\sum_{S} 
\sum_{Q \;:\; Q ^{\tau } =S}  \sum_{ \substack{P \;:\; Q\Subset P  \\ P_Q  \subset S}} 
\langle \Delta _{P}f \rangle _{P_Q}\langle T \mathbf 1_{S \setminus P_Q} , \Delta _Q g \rangle
\end{equation}
This is similar to the first subcase, since $\mathbf 1_{S \setminus P_Q}$ is supported in $(2Q)^c$, then
the off-diagonal estimates also imply 
\begin{equation*}
|\inn{T \mathbf 1_{S \setminus P_Q} }{\Delta_Q g}| \lesssim P_{\eta}( \mathbf 1_{S \setminus P_Q} )(Q) \|
\Delta_Q g \|_1 \lesssim \left[ \frac{\ell Q}{\ell P} \right]^{\eta'} \| \Delta_Q g \|_1.
\end{equation*}
Holding the relative lengths of $ Q$ and $ P$ fixed, we then have for integers $ v \geq r$, 
\begin{equation*}
\sum_{S} 
\sum_{Q \;:\; Q ^{\tau } =S} \sum_{ \substack{P \;:\; Q\Subset P  \\ P_Q  \subset S, \ 2 ^{v} \ell Q
= \ell P}} 
|\inn{T (\Delta_P f \mathbf 1_{S \setminus P_Q}) }{\Delta_Q g}|
\lesssim 2 ^{- v \eta '} B ^{0,v} (f,g). 
\end{equation*}
We use the notation \eqref{e:uv}, and Lemma~\ref{l:uv} to complete this case. 

\subsubsection*{Fourth Subcase}
We address the bottom alternative in \eqref{e:insideSplit}, namely the case in which 
$ S=Q ^{\tau } \subsetneq P_Q $.  The point here is to gain geometric decay in the degree to which $
Q$ and $ P_Q$ are separated in the stopping tree $ \mathcal S$. 

Given $ S\in \mathcal S $, let $ S = S ^{(0)} \subsetneq  S ^{(1)} \subsetneq  \cdots \subsetneq 
P_0$
be the maximal chain of stopping cubes 
which contain $ S$, and continue up to $ P_0$.  For each  $ S_0 \in \mathcal S$, and integer $ t
\geq 1$, we bound 
\begin{equation}\label{e:4th}
\Bigl\lvert 
\sum_{S \;:\; S ^{(t)} = S_0}   
 \sum_{P \;:\;  S ^{(t-1)} \Subset P_Q   \subset S_0} 
\langle \Delta _{P}f \rangle _{P_Q}\langle T \mathbf 1_{P_Q \setminus S ^{(t-1)}} ,  \Pi_S  g \rangle
\Bigr\rvert
\lesssim 2 ^{- c t} \langle \lvert  f\rvert  \rangle _{S_0} \langle \lvert  g\rvert  \rangle _{S_0}
\lvert  S_0\rvert.  
\end{equation}

The point is to use the off-diagonal estimates, but there is a complication in that the stopping
cubes
are not good. 
To address this, we let $ \mathcal Q (S)$ be the maximal good cubes with $ Q ^{\tau } =S$, and set 
\begin{equation*}
\tilde \Pi _{Q ^{\ast} } g = \sum_{Q \;:\; Q ^{\tau }=S, Q\subset Q ^{\ast} } \Delta _Q g, \qquad Q
^{\ast} \in \mathcal Q (S). 
\end{equation*}
The goodness of the cubes implies that  $ \textup{dist} (Q  ^{\ast} , \partial S ^{(t-1)}) \geq
(\ell Q ^{\ast} )
^{\epsilon } (\ell S ^{(t-1)}) ^{1- \epsilon } 
\geq 2 ^{t/2} \ell Q ^{\ast} $, by \eqref{e:good}.    

The second point is that we have 
\begin{equation*}
\Bigl\lVert 
 \sum_{P \;:\;  S ^{(t-1)} \Subset P_Q   \subset S_0} 
\langle \Delta _{P}f \rangle _{P_Q}  \mathbf 1_{P_Q \setminus S ^{(t-1)}} 
\Bigr\rVert _{\infty } \lesssim \langle \lvert  f\rvert  \rangle_S 
. 
\end{equation*}
Combining these last two observations with \eqref{e:offdiaggood}, we see that for each $ Q ^{\ast}
\in \mathcal Q (S)$, 
\begin{align*}
\Bigl\lvert 
\sum_{S \;:\; S ^{(t)} = S_0}   
 \sum_{P \;:\;  S ^{(t-1)} \Subset P_Q   \subset S_0} 
\langle \Delta _{P}f \rangle _{P_Q}\langle T \mathbf 1_{P_Q \setminus S ^{(t-1)}} ,  \tilde \Pi_ {Q
^{\ast} }  g \rangle
\Bigr\rvert &\lesssim  2 ^{-t/2}\langle \lvert  f\rvert  \rangle_ {S_0}  \lVert \tilde \Pi_ {Q
^{\ast} }  g\rVert_1 
\\&
\lesssim  2 ^{-t/2}\langle \lvert  f\rvert  \rangle_ {S_0}   \langle \lvert  g\rvert  \rangle_S
\lvert  Q ^{\ast} \rvert.   
\end{align*}
Here we have used the stopping condition to dominate $\tilde \Pi_ {Q ^{\ast} }  g $.  
To conclude, we simply observe that 
\begin{align*}
\sum_{S \;:\; S ^{(t)} = S_0}\langle \lvert  g\rvert  \rangle_S   \sum_{Q ^{\ast} \in \mathcal Q
(S)} \lvert  Q ^{\ast} \rvert
& \leq \sum_{S \;:\; S ^{(t)} = S_0}\langle \lvert  g\rvert  \rangle_S   \lvert  S\rvert 
\lesssim \langle \lvert  g\rvert  \rangle _{S_0} \lvert  S_0\rvert.   
\end{align*}
Our proof of \eqref{e:4th} is complete. 

\subsection{The Near Terms} 
We address the term in \eqref{e:near}. Fix an integer $ v \geq r$, and consider $ Q\subset 3P
\setminus P$ with 
$ 2 ^{v} \ell Q = \ell P$. The cube $ Q$ is good, so that by  \eqref{e:good} and \eqref{e:ODE}, we
have 
\begin{equation*}
\lvert \langle  T \Delta _P f, \Delta _Q g \rangle\rvert  
\lesssim 2 ^{- v \eta '} \langle \lvert  \Delta _P f\rvert  \rangle_P \lVert \Delta _Q g\rVert_1 . 
\end{equation*}
But, then, we have 
\begin{equation*}
\lvert  \eqref{e:near}\rvert \lesssim 2 ^{- v \eta '} B ^{0,v} (f,g),  
\end{equation*}
where the latter bilinear form is defined in \eqref{e:uv}.  It follows from \eqref{e:uv} that the
near term is dominated by a sparse bilinear form.

\subsection{The Neighbors}
We bound the term in \eqref{e:neighbor}.  
For $ P$, let $ P', P''$ be  choices  children of $ P$. There are at most $ O(1)$ such choices. 
For integers $ 0\leq v \leq r$,  we bound 
\begin{equation}\label{e:Neigh}
\sum_{\substack{P\;:\;  P\subset P_0 }}  
\sum_{\substack{Q\;:\; \ell Q \leq \ell P = 2 ^{v } \ell Q,\ Q\cap 3P  \neq \emptyset
}} \langle T (\Delta _{P} f \cdot \mathbf 1_{P'})  ,  \mathbf 1_{P''}\Delta _{Q} g\rangle. 
\end{equation}

The case of $ P'\neq P''$ is straight forward. The function $\Delta _{P} f \cdot \mathbf 1_{P'} $ is
constant, so that the Hardy inequality immediately implies that 
\begin{align*}
\lvert  \langle T (\Delta _{P} f \cdot \mathbf 1_{P'})  ,  \mathbf 1_{P''}\Delta _{Q} g\rangle\rvert 
&\lesssim \lvert  \langle \Delta _P  \rangle _{P'}\rvert  \lvert  P'\rvert ^{1/2}   \lVert 
\mathbf 1_{P''} \Delta _Q g\rVert_2 
\\
& \lesssim \lvert  \langle \Delta _P  \rangle _{P'}\rvert  \cdot \lVert \Delta _Q g\rVert_1 . 
\end{align*}
And this can be summed to the bound we want.  Namely, it is dominated by $ B ^{0,v} (f,g)$, where
the last term is defined in \eqref{e:uv}.  

The case of $ P'=P''$ reduces to the testing inequality, and we have the same bound as above.

\subsection{The Far Term} 
We address the terms in \eqref{e:far}.  
For integers $ u, v\geq 1$,  we impose additional restrictions on $ P$ and $ Q$, and obtain a sparse
bound with geometric decay in these parameters.  From this, the required bound follows.  
Namely, we have for 
\begin{equation}\label{e:FAR}
\ell P= \ell P', \  P'\subset 3 ^{u-1} P, \quad    2 ^{v} \ell Q = \ell P,\  Q\subset 3 ^{u+1}P
\setminus  3 ^{u}P, 
\end{equation}
we have from \eqref{e:ODE} the estimate below. 
\begin{equation*}
\lvert  \langle T \Delta _{P'} f,  \Delta _Q g\rangle\rvert  \lesssim 2 ^{- \eta' (u+v)}  \langle
\lvert   \Delta _ {P'} f\rvert  \rangle_ {P'} \| \Delta _Q g \|_1 . 
\end{equation*}
Therefore, appealing to the definition in \eqref{e:uv}
\begin{equation*}
\sum_{P} \sum_{(P',Q) \textup{satisfy \eqref{e:FAR}}} 
\lvert  \langle T \Delta _{P'} f,  \Delta _Q g\rangle\rvert  
\lesssim  2 ^{- \eta' (u+v)}  B ^{u,v} (f,g). 
\end{equation*}
By Lemma~\ref{l:uv}, this case is complete.

\section{Lemmas} \label{s:lemma}

We collect three separate groups of Lemma,  (a) the sparse domination of a class of dyadic forms;
(b) standard off-diagonal estimates; and (c) a Hardy inequality.

\subsection*{Sparse Domination}
We define a class of (sub) bilinear forms that are basic to the proof.    For a cube $P$, let
$i_P = \log_2 (\ell P)$.   Let $ D _{k} f = \sum_{P \;:\; \ell P = 2 ^{k}} \Delta _P f$, and
define 
\begin{equation}\label{e:uv}
B ^{u,v} (f,g) 
= \sum_{P}  \langle  \lvert  D _{i_P-u} f\rvert  \rangle _{3P} 
\langle  \lvert  D _{i_P-v} g\rvert  \rangle _{3P} \lvert  P\rvert 
\end{equation}
Above, $ u, v\geq 0$ are fixed integers, so that we are taking the martingale differences that are
somewhat smaller, over the triple of $ P$.  We comment that this is a dyadic operator of complexity
$ u+v$, in the language of \cite{MR2912709}. 

We remark that a standard argument would  write 
\begin{align} \label{e:Bint}
B ^{u,v} (f,g) 
& =  \int \sum_{P}  \langle  \lvert  D _{i_P-u} f\rvert  \rangle _{3P} 
\langle  \lvert  D _{i_P-v} g\rvert  \rangle _{3P}  \mathbf 1_{P}(x) \; dx 
\end{align}
It is clear that we would dominate this last integral by a product of square functions  
$ \int S_ u f \cdot S_v g \; dx$, with the square functions  defined by 
\begin{equation}  \label{e:S}
(S _{u} f) ^2 = 
\sum_{P}  \langle  \lvert  D _{i_P-u} f\rvert  \rangle _{3P}  ^2 \mathbf 1_{P}. 
\end{equation}
The deepest fact needed in our proof of the $ T1$ theorem is this:  The square functions $ S _{u} $
are weakly bounded, with constant linear in $ u$.  

\begin{lemma}\label{l:Su} We have the inequality below, valid for all integers $ u\geq 0$
\begin{equation}\label{e:Su<}
\lVert S_u f  \;:\;  L ^{1} \mapsto L^{1, \infty }\rVert \lesssim (1+ u).  
\end{equation}
\end{lemma}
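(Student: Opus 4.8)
The plan is to prove the weak $(1,1)$ bound for $S_u$ by a Calder\'on--Zygmund decomposition, exactly as one does for a classical square function, paying attention to how the two extra parameters — the shift $u$ in the frequency index and the dilation $3P$ in the averages — contribute to the constant. So fix $\lambda > 0$ and $f \in L^1$; we may assume $f \geq 0$. First I would perform a Calder\'on--Zygmund decomposition of $f$ at height $\lambda$ relative to the dyadic grid $\mathcal D$: this produces a collection of maximal dyadic cubes $\{R_j\}$ with $\lambda < \langle f \rangle_{R_j} \leq 2^d \lambda$, on whose complement $f \leq \lambda$ a.e., and $\sum_j |R_j| \leq \lambda^{-1}\|f\|_1$. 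Write $f = b + \sum_j b_j$ where $b = f \mathbf 1_{(\cup R_j)^c} + \sum_j \langle f \rangle_{R_j}\mathbf 1_{R_j}$ is the good part (bounded by $2^d\lambda$, with $\|b\|_1 \leq \|f\|_1$) and $b_j = (f - \langle f\rangle_{R_j})\mathbf 1_{R_j}$ has mean zero on $R_j$ and $\|b_j\|_1 \leq 2\int_{R_j} f$.

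For the good part, $S_u$ is $L^2$-bounded with constant $\lesssim \sqrt{1+u}$ — this follows from orthogonality of the $D_k$ together with the bounded-overlap of the family $\{3P : \ell P = 2^k\}$, which contributes a factor $O(1)$ per scale, and the shift $i_P - u$ merely relabels scales; squaring the display $(S_u f)^2 = \sum_P \langle |D_{i_P - u} f|\rangle_{3P}^2 \mathbf 1_P$ and integrating gives $\|S_u f\|_2^2 \lesssim (1+u)\sum_k \|D_k f\|_2^2 = (1+u)\|f\|_2^2$. Hence $|\{S_u b > \lambda/2\}| \lesssim \lambda^{-2}\|b\|_2^2 \lesssim \lambda^{-2}\cdot 2^d\lambda\|b\|_1 \lesssim \lambda^{-1}\|f\|_1$, with the constant linear in $u$ after the square root is accounted for — actually $\lesssim (1+u)\lambda^{-1}\|f\|_1$.

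For the bad part, set $\Omega = \bigcup_j 3^{u+2} R_j$ (an enlargement of the exceptional set by the amount dictated by the support spreading of $S_u$); then $|\Omega| \lesssim 3^{d(u+2)}\lambda^{-1}\|f\|_1$, which already costs an exponential-in-$u$ factor, so this enlargement is \emph{too crude}. The correct move — and the point where I expect the real work to be — is instead to estimate $\int_{(\cup R_j)^{c,\text{dilated}}} S_u(\sum_j b_j)$ directly in $L^1$ using the cancellation of $b_j$. The key structural fact is that $D_{i_P - u} b_j$ vanishes unless the cube of side $2^{i_P - u}$ meeting $b_j$ is comparable to or inside $R_j$; more precisely, $D_k b_j = 0$ once $2^k \geq \ell R_j$ (by the mean-zero property, since $b_j$ is then constant — namely zero — on the relevant cubes, as $b_j$ is supported in a single cube of that scale on which it integrates to zero), so only scales $2^{i_P - u} < \ell R_j$, i.e. $\ell P < 2^u \ell R_j$, contribute. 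For such $P$ with $3P \cap R_j \neq \emptyset$ one has $\langle |D_{i_P - u} b_j|\rangle_{3P} \leq |3P|^{-1}\|b_j\|_1$ when $3P \supset \operatorname{supp}(D_{i_P-u}b_j)$, and summing $\sum_P |3P|^{-1}|P| \mathbf 1_P$ over the $O(u + \log(\ell R_j/\ell P))$ relevant scales and over $P$ meeting the fixed $R_j$ gives, after integrating $\mathbf 1_P$, a total mass $\lesssim (1+u)\|b_j\|_1$. This is the heart of the linear-in-$u$ dependence: each $b_j$ interacts with $O(1)$ cubes $P$ at each of $O(1+u)$ scales, the triple $3P$ contributing only bounded overlap. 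I would carry this out by fixing $j$, splitting $S_u b_j$ according to whether $P$ lies close to $R_j$ or far (using that $3P$ must meet $R_j$), and then combining $\int S_u(\sum_j b_j) \leq \sum_j \int S_u b_j \lesssim (1+u)\sum_j \|b_j\|_1 \lesssim (1+u)\|f\|_1$, whence $|\{S_u(\sum_j b_j) > \lambda/2\}| \lesssim (1+u)\lambda^{-1}\|f\|_1$ by Chebyshev. Adding the two contributions gives \eqref{e:Su<}.

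The main obstacle is the bad-part estimate: one must resist the temptation to enlarge the exceptional set (which loses $3^{du}$) and instead exploit that the shift $u$ only opens up $O(1+u)$ \emph{scales} of genuine interaction between each bad atom $b_j$ and the averaging cubes $P$, with the dilation $3P$ costing only bounded overlap rather than volume. Making precise the claim that $D_{i_P-u}b_j$ is supported near $R_j$ and contributes at only $O(1+u)$ scales — essentially a support/cancellation bookkeeping for martingale differences of a mean-zero atom — is the one step requiring care; everything else is the standard square-function argument.
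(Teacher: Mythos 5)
Your proposal follows the same route as the paper's proof: a Calder\'on--Zygmund decomposition, an $L^2$ bound for the good part, and, for the bad part, the observation that the mean-zero atoms $b_j$ make $D_{i_P-u}b_j$ vanish for $\ell P > 2^u\ell R_j$, so that once one works outside the fixed-proportion exceptional set $\bigcup_j 3R_j$ (no $u$-dependent dilation) each atom interacts with only $O(1)$ cubes $P$ at each of $O(1+u)$ scales, yielding $\int_{E^c} S_u b \lesssim (1+u)\sum_j\|b_j\|_1$. Two small imprecisions worth noting: the $L^2$ bound of $S_u$ is in fact independent of $u$ (the shift only relabels scales, and the $3P$ overlap is $O(1)$ per scale, so there is no $\sqrt{1+u}$); and the scale count written as ``$O(u+\log(\ell R_j/\ell P))$'' should simply be $O(1+u)$, coming from the two-sided constraint $\ell R_j < \ell P \leq 2^u\ell R_j$ that follows once one insists $P\not\subset 3R_j$ and $3P\cap R_j\neq\emptyset$.
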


\begin{proof}
The square function $ S_u$ is bounded on $ L ^2 $, with constant independent of $ u$, by the
orthogonality of martingale differences.   To prove the weak-type inequality, we take $ f\in L
^{1}$, and apply the Calder\'on-Zygmund decomposition at height 1.  Thus, $ f = g+b$, where $ \lVert
g\rVert_2 \lesssim  \lVert f\rVert_1 ^{1/2}  $, and we have 
\begin{equation*}
b = \sum_{B\in \mathcal B} b_ B , 
\end{equation*}
where $ \mathcal B$ consists of disjoint dyadic cubes with $ \sum_{B\in \mathcal B} \lvert  B\rvert
\lesssim \lVert f\rVert_1 $, and $ b_B$ is supported on $ B$, has integral zero, and $ \lVert
b_B\rVert_1 \lesssim | B |$.  

We do not estimate $ S_u f $ on the set $ E = \bigcup _{B\in \mathcal B} 3 B$.  And estimate 
\begin{align*}
\lvert  \{ x\not\in E \;:\;   S_u f (x) > 2\}\rvert  
& \leq \lvert  \{  S_u g > 1\}\rvert +  \lvert  \{ x\not\in E \;:\;   S_u b (x) > 1\}\rvert  
\end{align*}
The first term is controlled by the $ L ^2 $ bound and the fact that $ \lVert g\rVert_2 ^2 \leq
\lVert f\rVert_1$. 

Concerning the function $ b$, observe that for $ P\not\subset E$, that we have  $ \langle  \lvert  D
_{i_P-u} f\rvert  \rangle _{3P} \neq 0$ only if there is some $ B \in \mathcal B$ with $ B\subset
3P$, and $ 2 ^{u}\ell B \geq \ell P$. 
For a fixed $ B$, there are only $ 3 ^{d} (1+u) $ such choices of $ P$.  
Therefore, we will estimate
\begin{align*}
 \lvert  \{ x\not\in E \;:\;   S_u b (x) > 1\} \rvert  
 &\lesssim  \sum_{P \;:\; P\not\subset E} \int _{P} \lvert  \Delta b \rvert\;dx 
 \\
 & \lesssim  
 \sum_{v=1} ^{u} 
 \sum_{P \;:\; P\not\subset E} \sum_{\substack{B\in \mathcal B \;:\; B\subset P\\ 2 ^{v} \ell B =
\ell P}}
 \int _{P} \lvert  \Delta b_B \rvert\;dx
 \\
 & \lesssim 
  \sum_{v=1} ^{u} 
 \sum_{P \;:\; P\not\subset E} \sum_{\substack{B\in \mathcal B \;:\; B\subset P\\ 2 ^{v} \ell B =
\ell P}} \lvert  B\rvert 
  \lesssim u \sum_{B\in \mathcal B} \lvert  B\rvert \lesssim u \lVert f\rVert_1.   
\end{align*}
Our proof is complete. 
\end{proof}

The previous estimate is the principal tool in this sparse bound, which we use repeatedly in our
proof of the sparse result. 

\begin{lemma}\label{l:uv} For all $ u,v \geq 0$, all bounded compactly supported functions $ f, g$,
there is a sparse collection $ \mathcal S$ so that 
\begin{equation*}
B ^{u,v} (f,g) \lesssim (1+u)(1+v) \Lambda (f,g).  
\end{equation*}
\end{lemma}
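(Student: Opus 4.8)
The plan is to dominate $B^{u,v}(f,g)$ by a sparse form using the standard Lerner-type stopping/recursion argument, with the weak-type bound for $S_u$ from Lemma~\ref{l:Su} as the only non-elementary input. First I would reduce to a localized statement: fix a large cube $P_0$ containing the supports of $f$ and $g$, and show that the portion of the sum $B^{u,v}$ over cubes $P \subset P_0$ admits a sparse bound with constant $\lesssim (1+u)(1+v)$, where the sparse collection is built inside $P_0$; the full bound follows by exhausting $\mathbb R^d$ by an increasing sequence of such cubes, as is done elsewhere in the paper.

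For the localized claim I would run a recursion on stopping cubes. Starting from $S = P_0$, and given a current stopping cube $S$, define the next generation of stopping children to be the maximal dyadic $S' \subsetneq S$ for which either $\langle |f| \rangle_{3S'} > C_1 \langle |f| \rangle_{3S}$, or $\langle |g| \rangle_{3S'} > C_1 \langle |g| \rangle_{3S}$, or $|\{x \in S : S_u(f\mathbf 1_S)(x) > C_1(1+u)\langle |f|\rangle_{3S}\}| $ picks out $S'$ as a maximal cube inside it (and symmetrically with the roles of $f,u$ replaced by $g,v$). Using the weak $(1,1)$ bound of Lemma~\ref{l:Su} with its linear-in-$u$ constant, together with the weak $(1,1)$ bound for the dyadic maximal function applied on $3S$, one sees that for $C_1$ large the union $F_S$ of these stopping children satisfies $|F_S| < \tfrac12 |S|$; hence the resulting collection $\mathcal S$ is sparse with the sets $E_S = S \setminus \bigcup\{S' : S' \text{ stopping child of } S\}$. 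For $P \subset P_0$, let $\pi(P)$ be the minimal stopping cube containing $3P$ — or, more carefully, the minimal $S\in\mathcal S$ with $3P \subset 3S$; by construction, for such $P$ one has $\langle |D_{i_P-u}f|\rangle_{3P} \le \langle |f|\rangle_{3P} \le C_1 \langle |f|\rangle_{3\pi(P)}$ and likewise for $g$, and moreover $\sum_{P : \pi(P) = S} \langle |D_{i_P-u}f|\rangle_{3P}\langle|D_{i_P-v}g|\rangle_{3P}|P| \lesssim (1+u)(1+v)\langle|f|\rangle_{3S}\langle|g|\rangle_{3S}|S|$. The last inequality is the heart of the matter and is obtained by Cauchy–Schwarz in the $P$-sum followed by the $L^2$-type estimate $\sum_{P:\pi(P)=S}\langle|D_{i_P-u}f|\rangle_{3P}^2 |P| = \int_S (S_u(f\mathbf 1_{3S}))^2 \lesssim (1+u)^2 \langle|f|\rangle_{3S}^2|S|$, valid because the stopping rule forces $S_u(f\mathbf 1_S)$ to be pointwise $\lesssim (1+u)\langle|f|\rangle_{3S}$ off the stopping children, and the contribution of cubes $P$ meeting a stopping child is controlled by the same bound after peeling — this is where the weak-type bound, rather than an $L^2$ bound, does the real work on the exceptional set. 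Summing over $S \in \mathcal S$ gives $B^{u,v}(f,g) \lesssim (1+u)(1+v)\sum_{S}\langle|f|\rangle_{3S}\langle|g|\rangle_{3S}|S|$.

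Finally I would remove the dilation: the collection $\{3S : S \in \mathcal S\}$ is not dyadic, but by the standard trick (each $3S$ is covered by boundedly many dyadic cubes of comparable size drawn from $O(1)$ shifted grids) one replaces $\sum_S \langle|f|\rangle_{3S}\langle|g|\rangle_{3S}|S|$ by $\lesssim \Lambda(f,g)$ for a genuinely sparse $\Lambda$ — alternatively one notes that $\{3S : S\in\mathcal S\}$ is itself a Carleson/sparse family after this bounded-overlap adjustment, since the sets $E_S$ remain $\gtrsim |S| \gtrsim |3S|$ and have bounded overlap. The main obstacle, as indicated above, is the passage from the pointwise control of $S_u(f\mathbf 1_S)$ on the good part of $S$ to the $L^2$ estimate $\sum_{P:\pi(P)=S}\langle|D_{i_P-u}f|\rangle_{3P}^2|P| \lesssim (1+u)^2\langle|f|\rangle_{3S}^2|S|$: one must handle the cubes $P$ whose triple $3P$ straddles a stopping child $S'$, and here the weak-$(1,1)$ bound of Lemma~\ref{l:Su} together with $\sum_{S' \text{ child of } S}|S'| \le \tfrac12|S|$ is exactly what makes the geometric series in the recursion converge with the claimed constant.
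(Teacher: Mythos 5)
Your proposal is conceptually the same as the paper's: localize to $P_0$, use the weak-$(1,1)$ bound of Lemma~\ref{l:Su} with its $(1+u)$ constant to build an exceptional set, recurse on its maximal dyadic components, and at the end accept a sparse form with averages over $3S$. But the execution differs in a way that matters: you flag a ``straddling'' obstacle that the paper's formulation simply does not have.

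The paper localizes the square function by \emph{cube selection}, not by \emph{function truncation}: it defines $(S_{u,P_0}f)^2 = \sum_{P\subset P_0}\langle |D_{i_P-u}f|\rangle_{3P}^2\,\mathbf 1_P$, takes the single exceptional set $E_0 = \{S_{u,P_0}f > C(1+u)\langle|f|\rangle_{3P_0}\}\cup\{S_{v,P_0}g > C(1+v)\langle|g|\rangle_{3P_0}\}$ (contained in $P_0$, measure $\le\tfrac14|P_0|$ by the weak-type bound), and recurses on its maximal dyadic components $\mathcal E_0$. There is no per-stopping-cube $L^2$ estimate: one does Cauchy--Schwarz \emph{pointwise} in $x$ to bound the integrand of $I(P_0)$ by $S_{u,P_0}f(x)\,S_{v,P_0}g(x)$, which off $E_0$ is $\le C^2(1+u)(1+v)\langle|f|\rangle_{3P_0}\langle|g|\rangle_{3P_0}$. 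For $x$ in a component $Q\in\mathcal E_0$, the sum over $P\subset Q$ recurses to $I(Q)$, and the sum over $P\supsetneq Q$ is constant in $x$ over $Q$ (indeed over the dyadic parent $\hat Q$), so it can be read off at any point of $\hat Q\setminus E_0$, which is nonempty by maximality. Since the collection $\{P : P\subset P_0\}$ is dyadic and the split is purely into $P\subset Q$ versus $P\supsetneq Q$, the triple $3P$ never enters the bookkeeping, and the ``cubes whose triple straddles a stopping child'' never arise.

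In your version the truncation $f\mathbf 1_S$ is what creates the problem you call the main obstacle, and the identity $\sum_{P:\pi(P)=S}\langle|D_{i_P-u}f|\rangle_{3P}^2|P| = \int_S (S_u(f\mathbf 1_{3S}))^2$ is not quite an identity: $D_{i_P-u}f\neq D_{i_P-u}(f\mathbf 1_{3S})$ in general, and $|P|$ is not $|P\cap S|$. The extra stopping conditions on $\langle|f|\rangle_{3S'}$, $\langle|g|\rangle_{3S'}$ are also unnecessary once you set things up as the paper does; the single level-set stopping condition suffices. So: right idea, right key lemma, but the per-$S$ $L^2$ estimate you want is harder than what is actually needed, and the gap you flag does not close as written. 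Replace it with the pointwise Cauchy--Schwarz plus recursion on dyadic components and everything becomes routine.
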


It is an easy corollary from the conclusion above for $ u,v=0$ that martingale transforms satisfy a
sparse bound.  And, we also comment that the linear dependence of the constant above presents no
difficulty in application, as we will always have a term that decreases geometrically in $ u +v$.  

\begin{proof}
Note that from the equality for $ B ^{u,v}$ in \eqref{e:Bint}, we have 
\begin{align*}
B ^{u,v} (f,g) \lesssim \int S_u f \cdot S_v g \; dx 
\end{align*}
with the square functions  defined by  \eqref{e:S}.  
But, we localize this familiar argument.  Define 
\begin{equation*}
(S _{u, P_0} f)^2 = \sum_{P \;:\; P \subset P_0}  \langle  \lvert  D _{i_P-u} f\rvert  \rangle
_{3P} ^2 \mathbf 1_{P}, 
\end{equation*}
we have for an absolute constant $ C$, and all choices of $ u \geq 0$, 
\begin{equation}\label{e:sq}
\lvert  \{x \in 3 P_0 \;:\;  S _{u,P_0} f > C (1+u) \langle \lvert  f\rvert  \rangle _{3
P_0}\}\rvert \leq
\tfrac 18 \lvert   P_0\rvert.    
\end{equation}
Moreover, the set on the left is contained in $ P_0$.

We construct the sparse bound this way.  Fix a large (non-dyadic) cube $ P_0$ that  $ \tfrac 12 P_0$
contains the  support of both $ f$ and $ g$. The sparse cubes outside of $ P_0 $ can be taken to $ 3
^{k} P_0$, for $ k\in \mathbb N $. 
We need to construct the sparse collection inside of $ P_0$.  
Consider the  restricted sum 
\begin{equation}\label{e:p0}
I (P_0) :=  \int \sum_{P \;:\; P\subset P_0}  \langle  \lvert  D
_{i_P-u} f\rvert  \rangle _{3P} 
\langle  \lvert  D _{i_P-v} g\rvert  \rangle _{3P}  \mathbf 1_{P} \; dx  . 
\end{equation}
Using \eqref{e:sq}, set 
\begin{equation*}
E _0  = \{ S _{u, P_0} f > C (1+u) \langle \lvert  f\rvert  \rangle _{3P_0}\} 
\cup \{ S _{v, P_0} g > C  (1+v)\langle \lvert  g\rvert  \rangle _{3P_0}\} . 
\end{equation*}
This set is contained in $ P_0$, and has measure at most $ \tfrac 14 \lvert  P_0\rvert $.  
Let $ \mathcal E_0$ be the maximal dyadic components of $ E_0$.  We have by Cauchy-Schwartz and
construction, 
 \begin{align*}
I (P_0) & \leq C ^2 \langle \lvert  f\rvert  \rangle _{3P_0} \langle \lvert  g\rvert  \rangle
_{3P_0}  \lvert  P_0\rvert 
+ \sum_{ Q\in \mathcal E_0} I (Q).  
\end{align*}
The first term on the right is the first term in our sparse bound.  We recurse on the second terms.
This completes the proof.
\end{proof}

A very general fact about sparse forms is that they admit a `universal domination.'   
\begin{lemma}\label{l:sparse<} 
Given $ f, g$, there is a sparse operator $ \Lambda _0$, and constant $ C>1$ so that for any other
sparse operator $ \Lambda $, we have $ \Lambda (f,g) < C \Lambda_0 (f,g)$.  

\end{lemma}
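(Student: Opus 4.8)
The plan is to fix $f,g$ and extract a single sparse collection $\mathcal S_0$ that simultaneously majorizes \emph{every} sparse form built from $f,g$. The key observation is that for any sparse collection $\mathcal S$, pointwise $\sum_{S\in\mathcal S}\langle f\rangle_S\langle g\rangle_S\mathbf 1_S \le \sum_{S\in\mathcal S}\langle|f|\rangle_S\langle|g|\rangle_S\mathbf 1_S$, and the $\langle\cdot\rangle_S$ factors of a cube $S$ in a sparse family are controlled by the dyadic maximal functions $M_{\mathcal D}f$, $M_{\mathcal D}g$ evaluated anywhere on $E_S$; since the $E_S$ have bounded overlap, $\Lambda(f,g)\lesssim \int M_{\mathcal D}f\cdot M_{\mathcal D}g$ for \emph{any} sparse $\Lambda$ drawn from a fixed dyadic grid $\mathcal D$, with constant depending only on $c$ and $d$. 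So the content of the lemma is really: the function $h := M_{\mathcal D}f\cdot M_{\mathcal D}g$ (or a suitable localization of it) is itself dominated by a single sparse form $\Lambda_0(f,g)$.

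First I would reduce to a fixed dyadic grid by noting that, up to a dimensional constant, one may cover the cube hosting $S$ by $O(1)$ cubes of a fixed grid of comparable size; alternatively, since all cubes appearing in the paper's sparse collections are dyadic in $\mathcal D^\omega$ anyway, no reduction is needed and I simply work in $\mathcal D := \mathcal D^\omega$. Second I would run the standard Calder\'on--Zygmund/stopping-time recursion to build $\mathcal S_0$: starting from a large cube $P_0$ with $\tfrac12 P_0\supset\operatorname{supp}f\cup\operatorname{supp}g$ (together with the ambient cubes $3^kP_0$), add $P_0$ to $\mathcal S_0$ with stopping values $\sigma_f(P_0)=\langle|f|\rangle_{P_0}$, $\sigma_g(P_0)=\langle|g|\rangle_{P_0}$; recursively, for a minimal $S\in\mathcal S_0$ take the maximal dyadic $S'\subset S$ with $\langle|f|\rangle_{S'}>C_0\sigma_f(S)$ or $\langle|g|\rangle_{S'}>C_0\sigma_g(S)$, and by the weak $(1,1)$ bound for the dyadic maximal function, choosing $C_0=C_0(d)$ large the union of these has measure $\le\tfrac12|S|$. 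This is exactly the construction already used for the stopping cubes $\mathcal S$ in \S\ref{s:proof} (minus the third, testing-condition, stopping rule), so $\mathcal S_0$ is sparse. The point of this particular collection is that on $S\setminus\bigcup\{S':S'\in\mathcal S_0(S)\text{ child}\}$ one has $M_{\mathcal D}f\le C_0\sigma_f(S)$ and $\langle|f|\rangle_Q\le C_0\sigma_f(S)$ for all $Q\subseteq S$ not inside a stopping child.

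Third, with $\mathcal S_0$ in hand I would verify the domination. Given any other sparse $\Lambda=\sum_{S\in\mathcal S}\langle f\rangle_S\langle g\rangle_S|S|$, each $S\in\mathcal S$ with $S\subset P_0$ lies under a unique minimal $S^\ast\in\mathcal S_0$ with $S^\ast\supseteq S$ (or $S$ meets $P_0^c$, handled by the ambient cubes); then $\langle|f|\rangle_S\le M_{\mathcal D}f(x)\le C_0\sigma_f(S^\ast)$ for a.e.\ $x\in E_S\setminus(\text{stopping children of }S^\ast)$, and similarly for $g$; summing $\langle|f|\rangle_S\langle|g|\rangle_S|S|\le c^{-1}\langle|f|\rangle_S\langle|g|\rangle_S|E_S|$ over those $S$ grouped by their host $S^\ast$, using $\sum_{S:S^\ast\text{ fixed}}|E_S|\le|S^\ast|$ from the bounded-overlap property, yields $\Lambda(f,g)\lesssim\sum_{S^\ast\in\mathcal S_0}\sigma_f(S^\ast)\sigma_g(S^\ast)|S^\ast| = \sum_{S^\ast\in\mathcal S_0}\langle|f|\rangle_{S^\ast}\langle|g|\rangle_{S^\ast}|S^\ast| \lesssim \Lambda_0(|f|,|g|)$ (replacing $|f|,|g|$ by $f,g$ at the end using $|\langle f\rangle|\le\langle|f|\rangle$ in the other direction is harmless since the statement is an inequality on forms, which the paper reads with absolute values). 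The main obstacle is purely bookkeeping: making precise the "each $S$ has a host $S^\ast$ in $\mathcal S_0$" assignment when $S$ is not itself a member of $\mathcal S_0$ and when $S$ straddles the boundary of $P_0$ — this is where one invokes that $f,g$ are supported well inside $P_0$ and that $\mathcal S_0$ includes the ambient dilates $3^kP_0$, so nothing escapes. Everything else is the classical stopping-time estimate already carried out twice in the paper, so the proof is short.
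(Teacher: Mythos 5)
Your argument is correct in substance but follows a genuinely different route from the paper within a fixed grid, so a comparison is in order. The paper's construction of the universal collection is a level-set decomposition of the \emph{product}: for each $k\in\mathbb Z$, $\mathcal U_k$ is the set of maximal dyadic cubes with $\langle f\rangle_Q\langle g\rangle_Q\geq 8^{2dk}$, and the sparsity of $\mathcal U=\bigcup_k\mathcal U_k$ follows by a pigeonhole (if $P\in\mathcal U_{k+1}$ sits inside $Q\in\mathcal U_k$, then at least one of $\langle f\rangle_P$, $\langle g\rangle_P$ must exceed the $Q$-average by a factor $4^d$, and the measure of such $P$'s is small). You instead build $\mathcal S_0$ by re-running the Calder\'on--Zygmund stopping time from \S\ref{s:proof} (dropping the testing condition): this also yields a sparse family, and the same ``host'' argument works, since the minimal $S^\ast\in\mathcal S_0$ over $S$ forces $\langle|f|\rangle_S\leq C_0\sigma_f(S^\ast)$ and $\langle|g|\rangle_S\leq C_0\sigma_g(S^\ast)$, after which Carleson packing of the $|E_S|$'s inside $S^\ast$ finishes. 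The paper's construction is slightly more self-contained and elegant in that it needs no root cube $P_0$ and no separate treatment of ambient cubes $3^kP_0$ or of large $S$ straddling $P_0$ (all scales are captured automatically by the level sets of $\langle f\rangle_Q\langle g\rangle_Q$), whereas your route reuses machinery already on the page. Both proofs, incidentally, defer the reduction from arbitrary cubes to dyadic cubes to the shifted-dyadic-grid covering lemma; you gesture at this the same way the paper does, so no additional gap there.

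Two small corrections. First, the opening reduction to $\int M_{\mathcal D}f\cdot M_{\mathcal D}g$ is a red herring: your actual argument never passes through the product of maximal functions, and it is better to say outright that the stopping construction directly bounds every $\langle|f|\rangle_S$, $\langle|g|\rangle_S$ by stopping data, with no maximal function needed. Second, when you write $\sum_{S:\,S^\ast\text{ fixed}}|E_S|\leq|S^\ast|$, the bounded overlap gives $\sum_{S}\mathbf 1_{E_S}\leq c^{-1}$, so the correct bound is $\sum_{S:\,S^\ast\text{ fixed}}|E_S|\leq c^{-1}|S^\ast|$, and combined with $|S|\leq c^{-1}|E_S|$ you collect a factor $c^{-2}$ — harmless, but worth recording, as it matches the $c^{-2}$ appearing in the paper's \eqref{e:su}.
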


\begin{proof}
Recall that shifted dyadic grids are a collection $ \mathbf G$ of at most $ 3 ^{d}$ dyadic grids $
\mathcal G \in \mathbf G$, 
so that \emph{every} cube $ Q\subset \mathbb R ^{d}$ can be approximated by some cube in a dyadic
grid $ \mathcal G\in \mathbf G$.  Namely, for each cube $ Q$, there is a $ \mathcal G$ and a cube $
P\in \mathcal G$ so that 
$ \tfrac 16 \ell (P) \leq \ell (Q ) $ and $ Q\subset 6 P$.   See \cite{MR3065022}*{Lemma 2.5} for an
explicit proof.

Shifted grids permit us to construct a universal sparse operator for each grid $ \mathcal G\in
\mathbf G$. 
We show this: For any dyadic grid $ \mathcal G$, let $ \mathcal S \subset \mathcal G $ be 
such that for $ S\in \mathcal S$, there is a set $ E_S  \subset S$ so that $ \lvert  E_S\rvert > c
\lvert  S\rvert  $ and 
$ \lVert \sum_{S\in \mathcal S} \mathbf 1_{E_S} \rVert _{\infty } \leq c ^{-1} $.  
Given non-negative $ f, g$ bounded and compactly supported, we construct $ \mathcal U _{\mathcal G}
\subset \mathcal G$ so that  there are pairwise disjoint exceptional sets $ \{E _{Q} \;:\; Q\in
\mathcal U _{\mathcal G}\}$ so that $ E_Q\subset Q$ and $ \lvert  E_Q\rvert \geq \tfrac 12 \lvert 
Q\rvert  $, and moreover, 
\begin{equation}\label{e:su}
\sum_{S\in \mathcal S} \langle f \rangle_S \langle g \rangle_S \mathbf 1_{S} 
 \leq 16 ^{ d} c ^{-2}  \sum_{U \in \mathcal U _{\mathcal G}}  \langle f \rangle_U \langle g
\rangle_U
\mathbf 1_{U}.  
\end{equation}
To complete the proof of the Lemma, we remark that the collection $ \{ \mathcal U _{\mathcal G}
\;:\; \mathcal G\in \mathbf G\}$ is sparse. 
It dominates every sparse operator formed from some $ \mathcal G\in \mathbf G$, hence is universal
for all sparse operators. 

\smallskip 
For integers $ k$, let $ \mathcal U _{k}$ be the maximal cubes $ Q\in \mathcal G$ so that $ \langle
f \rangle_Q \langle g \rangle_Q \geq 8 ^{2 dk}$. Then, the product is at most   $ 8 ^{2d k+ 2d/3}$.  The
cubes $ Q\in \mathcal U_k$ are pairwise disjoint, by maximality.  
We check that the children are small in measure.  Setting $ \mathcal C (Q) =\{P\in \mathcal U _{k+1}
\;:\; P\subsetneq Q\}$, 
we can  write  $  \mathcal C (Q) = \mathcal C _{f}(Q) \cup \mathcal C _{g}(Q)$, where $ P\in \mathcal C _{f}(Q)$
if $ P\in \mathcal C(Q)$ and $ 
\langle f \rangle_P > 4 ^{d} \langle f \rangle_Q$, and similarly for $ \mathcal C _{g}(Q)$. But, then
it is clear that 
\begin{equation*}
\sum_{P\in \mathcal C _{f}(Q)} \lvert  P\rvert \leq 4 ^{-d} \lvert  Q\rvert  \leq \tfrac 14 \lvert  Q\rvert. 
\end{equation*}
We set $ E_Q = Q \setminus \bigcup _{P\in \mathcal C(Q)} P$. This set has measure at least $ \tfrac{1}{ 2 }
\lvert  Q\rvert $.  

Set $ \mathcal U _{\mathcal G} = \bigcup _{k} \mathcal U_k$.  The sets $ \{E_Q \;:\; Q\in \mathcal
U\}$ are pairwise disjoint. 
Now, given the sparse collection as above, provided $ \langle f \rangle_S \langle g \rangle_S \neq
0$,  each $ S\in \mathcal S$ has a parent $ S ^{u} \in \mathcal U$, namely the smallest element of $
\mathcal U$ that contains $ S$.  
Then, 
\begin{align*}
\sum_{S\in \mathcal S} \langle f \rangle_S \langle g \rangle_S \mathbf 1_{S} 
& = 
\sum_{U \in \mathcal U _{\mathcal G}}  
\sum_{\substack{S\in \mathcal S\\ S ^{u} = U }} \langle f \rangle_S \langle g \rangle_S \mathbf
1_{S}  
\\
& \leq 16 ^{d} 
\sum_{U \in \mathcal U _{\mathcal G}}  \langle f \rangle_U \langle g \rangle_U 
\sum_{\substack{S\in \mathcal S\\ S ^{u} = U }}   \mathbf 1_{S}  
 \leq 16 ^{d} c ^{-2}  \sum_{U \in \mathcal U _{\mathcal G}}  \langle f \rangle_U \langle g
\rangle_U
\mathbf 1_{U}.  
\end{align*}
This verifies \eqref{e:su}, so completes the proof. 

\end{proof}

\subsection*{Off-Diagonal Estimates}
We begin with the very common off-diagonal estimate. For $\eta > 0$ consider the Poisson-like
operator
\begin{equation*}\label{Poisson}
P_{\eta} \Phi (Q) : = \int_{\mathbb R^d} \frac{(\ell Q)^{\eta} \Phi(y)}{(\ell Q)^{d+ \eta} + \textup{dist}(y,
Q)^{d+ \eta}} \; dy.
\end{equation*}

\begin{lemma}[Off-diagonal estimate]\label{offdiagestimate}  Let $g$ be a function with $\int g \,
dx
= 0$, supported on a
cube $Q$, and $f \in L^2$ supported on $(2Q)^c$, then we have
\begin{equation} \label{e:ODE}
|\inn{Tf}{g}| \lesssim P_{\eta}|f|(Q) \| g \|_1 \leq P_{\eta}|f|(Q) |Q|^{1/2} \| g \|_2.
\end{equation}
\end{lemma}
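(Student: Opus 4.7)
The plan is to exploit the cancellation $\int g\,dx=0$ in the usual Calder\'on–Zygmund manner. Writing $x_Q$ for the center of $Q$, I would first represent the pairing, using the disjointness of the supports of $f$ and $g$, as
\begin{equation*}
\langle T f,g\rangle \;=\; \int_{Q}\int_{(2Q)^{c}} K(x,y)\,f(y)g(x)\,dy\,dx \;=\; \int_{Q}\int_{(2Q)^{c}} \bigl[K(x,y)-K(x_Q,y)\bigr]\, f(y) g(x)\,dy\,dx,
\end{equation*}
where the second equality uses $\int_{Q} g\,dx=0$ to insert $-K(x_Q,y)$ at no cost (the $y$-integral of $f(y)K(x_Q,y)$ is independent of $x$).

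The next step is to bound the kernel difference using the H\"older regularity in the first variable from \eqref{e:size}. For $x\in Q$ and $y\in (2Q)^{c}$ the denominator $|x_Q-y|$ is comparable (up to dimensional constants) to $\ell Q + \operatorname{dist}(y,Q)$, while $|x-x_Q|\lesssim \ell Q$, so
\begin{equation*}
\bigl|K(x,y)-K(x_Q,y)\bigr| \;\lesssim\; \frac{(\ell Q)^{\eta}}{(\ell Q + \operatorname{dist}(y,Q))^{d+\eta}}.
\end{equation*}
Plugging this in and using Fubini, the inner integral in $y$ is precisely $P_{\eta}|f|(Q)$ up to an absolute constant, and the outer integral in $x$ contributes $\|g\|_{1}$. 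This gives the first inequality $|\langle Tf,g\rangle|\lesssim P_{\eta}|f|(Q)\,\|g\|_{1}$. The second inequality is immediate from Cauchy–Schwarz since $g$ is supported in $Q$: $\|g\|_{1}\leq |Q|^{1/2}\|g\|_{2}$.

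The main, purely technical, obstacle is that the smoothness hypothesis in \eqref{e:size} requires $2|x-x_Q|<|x_Q-y|$, whereas the support condition $y\in(2Q)^{c}$ only guarantees $|y-x_Q|\geq \ell Q$ while $|x-x_Q|$ may reach $\tfrac{\sqrt d}{2}\ell Q$. I would handle this by splitting the $y$-integration at the scale $c_d\,\ell Q$: on the outer region the H\"older bound applies verbatim, and on the thin inner annulus (a bounded number of $Q$-lengths wide) the pointwise size bound $|K(x,y)|\lesssim |x-y|^{-d}\lesssim (\ell Q)^{-d}$ and a direct computation give the same order of magnitude $(\ell Q)^{-d}\lesssim (\ell Q)^{\eta}/(\ell Q+\operatorname{dist}(y,Q))^{d+\eta}$ on that annulus. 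After absorbing the dimensional constants, both pieces are dominated by $P_{\eta}|f|(Q)\,\|g\|_{1}$, completing the estimate.
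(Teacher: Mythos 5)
Your argument is correct and is essentially the paper's own proof: both insert $-K(x_Q,y)$ using $\int g\,dx=0$, apply the H\"older regularity of the kernel from \eqref{e:size}, and recognize the resulting $y$-integral as $P_{\eta}|f|(Q)$ with the $x$-integral contributing $\|g\|_1$, the second inequality following from Cauchy--Schwarz. Your extra splitting near the boundary of $2Q$, where the condition $2|x-x_Q|<|x_Q-y|$ can fail, is a careful touch that the paper's proof silently elides.
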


{\bf Proof: } Let $x_Q$ be the center of $Q$, then we have
\begin{align*}
|\inn{Tf}{g}| & = \left| \int_Q \int_{(2Q)^c}  K(x,y)f(y)g(x) \, dy \, dx \right| = \left|
\int_{(2Q)^c} \int_Q  (K(x,y) - K(x_Q,y) )f(y)g(x) \, dx \, dy \right| \\
& \leq \mathcal K_T \int_{(2Q)^c} \int_Q \frac{|x-x_Q|^{\eta}}{|x-y|^{d+\eta}} |f(x)g(y)| \, dx \, dy 
\lesssim \mathcal K_T P_{\eta}|f|(Q) \| g \|_1.
\end{align*}
And the second inequality follows from Cauchy-Schwarz. \qed

\begin{lemma}\label{offdiaggood}
Suppose that $Q \Subset P$ and $Q$ is good, then there is $\eta' = \eta'(\eta, \gamma) > 0$, such
that
\begin{equation}\label{e:offdiaggood}
P_{\eta} \mathbf 1_{\mathbb R^d \setminus P}(Q) \lesssim \left[ \frac{\ell Q}{\ell P} \right]^{\eta'}. 
\end{equation}\end{lemma}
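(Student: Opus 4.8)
\textbf{Proof proposal for Lemma~\ref{offdiaggood}.}

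The plan is to estimate the integral defining $P_{\eta}\mathbf 1_{\mathbb R^d\setminus P}(Q)$ by splitting the domain of integration $\mathbb R^d\setminus P$ into dyadic annuli around $Q$ and using the lower bound on $\textup{dist}(Q,\textup{skel}P)$ coming from goodness. First I would record that, by definition,
\begin{equation*}
P_{\eta}\mathbf 1_{\mathbb R^d\setminus P}(Q)=\int_{\mathbb R^d\setminus P}\frac{(\ell Q)^{\eta}}{(\ell Q)^{d+\eta}+\textup{dist}(y,Q)^{d+\eta}}\;dy,
\end{equation*}
and that the integrand is bounded by $(\ell Q)^{-d}$ always, and by $(\ell Q)^{\eta}\textup{dist}(y,Q)^{-d-\eta}$ when $\textup{dist}(y,Q)\geq \ell Q$. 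Since $Q\Subset P$ and $Q$ is good, Lemma~\ref{GeomGood} (equivalently \eqref{e:good}) gives $\textup{dist}(Q,\partial P)\gtrsim(\ell Q)^{\gamma}(\ell P)^{1-\gamma}=:\delta$, and crucially $\delta/\ell Q\gtrsim(\ell P/\ell Q)^{1-\gamma}\geq 2^{r(1-\gamma)}\gg 1$, so every point $y\notin P$ satisfies $\textup{dist}(y,Q)\geq\delta\gg\ell Q$; hence the second bound on the integrand applies on the entire domain.

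Next I would carry out the annular decomposition: write $\mathbb R^d\setminus P\subset\bigcup_{j\geq 0}A_j$ where $A_j=\{y:2^j\delta\leq\textup{dist}(y,Q)<2^{j+1}\delta\}$. On $A_j$ one has $|A_j|\lesssim(2^{j+1}\delta+\ell Q)^d\lesssim 2^{jd}\delta^d$ (using $\delta\gg\ell Q$), and the integrand is $\lesssim(\ell Q)^{\eta}(2^j\delta)^{-d-\eta}$. Summing,
\begin{equation*}
P_{\eta}\mathbf 1_{\mathbb R^d\setminus P}(Q)\lesssim\sum_{j\geq 0}2^{jd}\delta^d\cdot(\ell Q)^{\eta}(2^j\delta)^{-d-\eta}=\Bigl(\frac{\ell Q}{\delta}\Bigr)^{\eta}\sum_{j\geq 0}2^{-j\eta}\lesssim\Bigl(\frac{\ell Q}{\delta}\Bigr)^{\eta}.
\end{equation*}
Finally, substituting $\delta\gtrsim(\ell Q)^{\gamma}(\ell P)^{1-\gamma}$ gives $\ell Q/\delta\lesssim(\ell Q/\ell P)^{1-\gamma}$, so $P_{\eta}\mathbf 1_{\mathbb R^d\setminus P}(Q)\lesssim(\ell Q/\ell P)^{\eta(1-\gamma)}$, and one sets $\eta'=\eta(1-\gamma)>0$.

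The routine part is the annular sum; the only point that needs genuine care — and what I would flag as the main obstacle — is justifying that $\textup{dist}(y,Q)\gtrsim\delta$ for \emph{all} $y\notin P$, not merely that $\textup{dist}(Q,\partial P)\gtrsim\delta$. This requires using that $Q\subset P$ together with $Q\Subset P$ (so the separation from $\partial P$ really does control the distance to the whole complement $\mathbb R^d\setminus P$, since any path from $Q$ to a point outside $P$ must cross $\partial P$), and also checking that $\delta$ dominates $\ell Q$ so that the measure estimate $|A_j|\lesssim 2^{jd}\delta^d$ is valid with constants depending only on $d$. Both follow cleanly from $2^r\ell Q\leq\ell P$ and $r\geq(1-\gamma)^{-1}$; no deeper input is needed.
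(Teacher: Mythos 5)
Your proof is correct and uses the same key idea as the paper's: goodness forces $\textup{dist}(y,Q)\gtrsim\delta:=(\ell Q)^{\gamma}(\ell P)^{1-\gamma}$ for every $y\notin P$, and extracting the factor $(\ell Q/\delta)^{\eta}=(\ell Q/\ell P)^{\eta(1-\gamma)}$ gives the stated exponent $\eta'=\eta(1-\gamma)$. The paper finishes the remaining integral estimate more compactly, replacing $(\ell Q)^{d+\eta}$ by $\delta^{d+\eta}$ in the denominator and recognizing the result as $(\ell Q/\delta)^{\eta}\,P_{\eta}\mathbf 1_{\mathbb R^d}(\lambda Q)$ with $\lambda=(\ell P/\ell Q)^{1-\gamma}$, whereas you sum over dyadic annuli; these are cosmetically different renderings of the same computation.
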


\begin{proof}
Let $\lambda = (\ell P / \ell Q)^{1-\gamma}$. By goodness of $Q$, Lemma \ref{GeomGood} implies
\begin{align*}
P_{\eta} \mathbf 1_{\mathbb R^d \setminus P}(Q) & = \int_{\mathbb R^d \setminus P} \frac{(\ell Q)^{\eta}}{(\ell Q)^{d+ \eta} +
\textup{dist}(y,Q)^{d + \eta}} \; dy & \\
& \leq \int_{\mathbb R^d} \frac{(\ell Q)^{\eta}}{((\ell Q)^{\gamma} (\ell P)^{1- \gamma})^{d+ \eta} +
\textup{dist}(y,Q)^{d + \eta}} \; dy \\
& \leq \left[ \frac{\ell Q }{\ell P} \right]^{\eta(1- \gamma)}  P_{\eta} \mathbf 1_{\mathbb R^d }(\lambda Q).
\end{align*}
So, the result follows.
\end{proof}

\subsection*{Hardy's Inequality}

This is the version of Hardy's inequality that we need. It can be proved from the one dimensional
version.  
In point of fact, we only need this in the case where the function $ f$ below is constant. 
\begin{lemma}\label{l:hardy}  For any cube,  $ P$, and $ 1 < p < \infty $, we have 
\begin{equation}\label{e:hardy}
\int _{3P \setminus P} \int _{P}  \frac {f (x) g (y)} { \lvert  x-y\rvert ^{n} } \; dxdy 
\lesssim \lVert f\rVert_p \lVert g\rVert_{p'}.  
\end{equation}

\end{lemma}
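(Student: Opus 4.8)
\textbf{Proof plan for Lemma~\ref{l:hardy}.}
The plan is to reduce the $d$-dimensional statement to the classical one-dimensional Hardy inequality. First I would dominate the kernel $|x-y|^{-n}$ on the region of integration. Write $P = \prod_{i=1}^{d}[a_i,b_i]$ with sidelength $\ell = \ell P$, so that $3P\setminus P$ is covered by the $2d$ ``slabs'' in which one coordinate, say the $i$-th, ranges over $[a_i - \ell, a_i]\cup[b_i, b_i+\ell]$ while the others stay in $[a_i-\ell,b_i+\ell]$. On such a slab, for $x$ in the slab and $y\in P$, one has $|x-y|\gtrsim |x_i - y_i|$ plus the contribution from the coordinate that has been pushed out, but the crude bound $|x-y|\ge |x_i-y_i|$ already gives $|x-y|^{-n}\le |x_i-y_i|^{-n}$, which is far too weak by itself. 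So instead I would keep the full kernel and integrate out the $d-1$ ``transverse'' coordinates of $y$ first: for fixed $x$ and fixed $y_i$, $\int_{|y_j - x_j|\le 2\ell,\ j\ne i}|x-y|^{-n}\,dy' \lesssim |x_i - y_i|^{-1}$ by the elementary estimate $\int_{\mathbb R^{d-1}} (t^2 + |z|^2)^{-d/2}\,dz \lesssim t^{-1}$ (valid since we only integrate over a bounded region, there is no large-$|z|$ issue). This collapses the double integral, up to constants, to a sum over the $2d$ slabs and the two ``sides'' of $P$ in coordinate $i$ of one-dimensional expressions of the form
\begin{equation*}
\int_{0}^{\ell}\!\!\int_{0}^{\ell} \frac{F(s)\,G(t)}{s+t}\,ds\,dt,
\end{equation*}
where $F$ and $G$ are the partial ``fiber'' integrals of $|f|$ and $|g|$ against the transverse variables, and $s,t$ measure distance to the shared face.

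Next I would invoke the one-dimensional Hardy (or Hilbert) inequality: the bilinear form $(F,G)\mapsto \int\!\int F(s)G(t)/(s+t)\,ds\,dt$ is bounded from $L^p\times L^{p'}$ to $\mathbb R$, with a constant depending only on $p$ (this is the classical Hilbert inequality, which is exactly the endpoint of Hardy's inequality; it is the one fact I would cite rather than reprove). Applying it coordinate by coordinate gives a bound by $\|F\|_{L^p(0,\ell)}\|G\|_{L^{p'}(0,\ell)}$, and then Minkowski's integral inequality (to pull the transverse integrations back out through the $L^p$ norm) together with Fubini returns $\|f\|_{L^p(P)}\|g\|_{L^{p'}(3P)}\le \|f\|_p\|g\|_{p'}$. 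Summing the $O(d)$ slab contributions finishes the argument.

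The main obstacle is purely bookkeeping: making sure the transverse integration genuinely produces a factor $|x_i - y_i|^{-1}$ with a clean constant, and that after this reduction the variables $s,t$ really do run over a half-line (or bounded interval) so that the scale-invariant one-dimensional inequality applies with no leftover dependence on $\ell P$. Since the paper only needs the case $f$ constant, one could shortcut all of this: with $f\equiv c\mathbf 1_P$ the inner integral $\int_P |x-y|^{-n}\,dy$ over $y$ is, for $x\in 3P\setminus P$ at distance $\delta = \operatorname{dist}(x,P)$, comparable to $\log(\ell P/\delta)$ near the face of $P$ and integrable, so $\int_{3P}\bigl(\int_P|x-y|^{-n}dy\bigr)^{p}\,dx \lesssim |P|$ and Hölder against $g$ gives \eqref{e:hardy} directly; but I would present the general argument above since it is barely longer and more transparent.
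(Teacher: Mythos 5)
The paper does not actually give a proof of Lemma~\ref{l:hardy}; it merely remarks that the inequality ``can be proved from the one dimensional version'' and that only the case of constant $f$ is needed. Your shortcut for the constant case is correct and suffices for everything the paper uses, so the lemma is established for the purposes of the paper.

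Your general-case reduction, however, has a genuine gap in the ``collapse'' step. You integrate the kernel $|x-y|^{-d}$ over the $d-1$ transverse $y$-coordinates to obtain $|x_i-y_i|^{-1}$, and then pair this against the $L^1$ fiber integrals $F(s)=\int |f(s,z)|\,dz$ and $G(t)=\int |g(t,w)|\,dw$. But $g$ depends on the very transverse variables you integrated out of the kernel, so the passage to $\int\!\int F(s)G(t)/(s+t)\,ds\,dt$ is not a legitimate domination, and even granting it, the endgame fails: Minkowski gives $\|F\|_{L^p(ds)}\le\int\|f(\cdot,z)\|_{L^p(ds)}\,dz\lesssim (\ell P)^{(d-1)/p'}\|f\|_p$ (and symmetrically for $G$), so the Hilbert bound $\|F\|_p\|G\|_{p'}$ overshoots the target $\|f\|_p\|g\|_{p'}$ by a factor $(\ell P)^{d-1}$. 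A scaling check with $f=g=\mathbf 1$ confirms this: the left side of \eqref{e:hardy} is of order $(\ell P)^{d}$, but $\|F\|_p\|G\|_{p'}\sim(\ell P)^{2d-1}$. The fix is to treat the transverse integration as a $(d-1)$-dimensional convolution: with $K_a(z)=(a^2+|z|^2)^{-d/2}$, Young's inequality gives
\begin{equation*}
\int\!\!\int \frac{|f(s,z)||g(t,w)|}{\bigl((s-t)^2+|z-w|^2\bigr)^{d/2}}\,dz\,dw
\le \|f(s,\cdot)\|_{p}\,\|g(t,\cdot)\|_{p'}\,\|K_{|s-t|}\|_{1}
\lesssim \frac{\|f(s,\cdot)\|_{p}\,\|g(t,\cdot)\|_{p'}}{|s-t|},
\end{equation*}
so the one-dimensional Hilbert inequality should be applied with the mixed-norm fibers $F(s)=\|f(s,\cdot)\|_{p}$ and $G(t)=\|g(t,\cdot)\|_{p'}$; then $\|F\|_{L^p(ds)}=\|f\|_p$ and $\|G\|_{L^{p'}(dt)}=\|g\|_{p'}$ exactly by Fubini, with no spurious power of $\ell P$. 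With that correction the slab decomposition and the appeal to the one-dimensional inequality go through as you describe.
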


\bibliographystyle{amsplain}

\begin{bibdiv}
\begin{biblist}

\bib{2016arXiv160506401B}{article}{
      author={{Benea}, C.},
      author={{Bernicot}, F.},
      author={{Luque}, T.},
       title={{Sparse bilinear forms for Bochner Riesz multipliers and
  applications}},
        date={2016-05},
     journal={ArXiv e-prints},
      eprint={http://arxiv.org/abs/1605.06401},
}

\bib{2015arXiv151000973B}{article}{
    author={Bernicot, Fr{\'e}d{\'e}ric},
   author={Frey, Dorothee},
   author={Petermichl, Stefanie},
   title={Sharp weighted norm estimates beyond Calder\'on-Zygmund theory},
   journal={Anal. PDE},
   volume={9},
   date={2016},
   number={5},
   pages={1079--1113},
}

\bib{MR3521084}{article}{
      author={Conde-Alonso, Jos{\'e}~M.},
      author={Rey, Guillermo},
       title={A pointwise estimate for positive dyadic shifts and some
  applications},
        date={2016},
        ISSN={0025-5831},
     journal={Math. Ann.},
      volume={365},
      number={3-4},
       pages={1111\ndash 1135},
  url={http://dx.doi.org.prx.library.gatech.edu/10.1007/s00208-015-1320-y},
      review={\MR{3521084}},
}

\bib{2016arXiv160305317C}{article}{
      author={{Culiuc}, A.},
      author={{Di Plinio}, F.},
      author={{Ou}, Y.},
       title={{Domination of multilinear singular integrals by positive sparse
  forms}},
        date={2016-03},
     journal={ArXiv e-prints},
      eprint={http://arxiv.org/abs/1603.05317},
}

\bib{161001958}{article}{
  author={{Culiuc}, A.},
  author={Di Plinio, F.},
  author={Ou, Y.},
  title={Uniform sparse domination of singular integrals via dyadic shifts},
  date={2016-10},
  journal={ArXiv e-prints},
  eprint={http://arxiv.org/abs/1610.01958},
}

\bib{MR763911}{article}{
      author={David, Guy},
      author={Journ{\'e}, Jean-Lin},
       title={A boundedness criterion for generalized {C}alder\'on-{Z}ygmund
  operators},
        date={1984},
        ISSN={0003-486X},
     journal={Ann. of Math. (2)},
      volume={120},
      number={2},
       pages={371\ndash 397},
         url={http://dx.doi.org.prx.library.gatech.edu/10.2307/2006946},
      review={\MR{763911}},
}

\bib{2016arXiv160706319D}{article}{
      author={{Domelevo}, K.},
      author={{Petermichl}, S.},
       title={{A sharp maximal inequality for differentially subordinate
  martingales under a change of law}},
        date={2016-07},
     journal={ArXiv e-prints},
      eprint={http://arxiv.org/abs/1607.06319},
}

\bib{MR1110189}{incollection}{
      author={Figiel, Tadeusz},
       title={Singular integral operators: a martingale approach},
        date={1990},
   booktitle={Geometry of {B}anach spaces ({S}trobl, 1989)},
      series={London Math. Soc. Lecture Note Ser.},
      volume={158},
   publisher={Cambridge Univ. Press, Cambridge},
       pages={95\ndash 110},
      review={\MR{1110189}},
}

\bib{MR2912709}{article}{
      author={Hyt{\"o}nen, Tuomas~P.},
       title={The sharp weighted bound for general {C}alder\'on-{Z}ygmund
  operators},
        date={2012},
        ISSN={0003-486X},
     journal={Ann. of Math. (2)},
      volume={175},
      number={3},
       pages={1473\ndash 1506},
         url={http://dx.doi.org/10.4007/annals.2012.175.3.9},
      review={\MR{2912709}},
}

\bib{MR2956641}{article}{
      author={Hyt{\"o}nen, Tuomas~P.},
       title={Vector-valued singular integrals revisited---with random dyadic
  cubes},
        date={2012},
        ISSN={0239-7269},
     journal={Bull. Pol. Acad. Sci. Math.},
      volume={60},
      number={3},
       pages={269\ndash 283},
         url={http://dx.doi.org.prx.library.gatech.edu/10.4064/ba60-3-7},
      review={\MR{2956641}},
}

\bib{MR3065022}{article}{
   author={Hyt{\"o}nen, Tuomas P.},
   author={Lacey, Michael T.},
   author={P{\'e}rez, Carlos},
   title={Sharp weighted bounds for the $q$-variation of singular integrals},
   journal={Bull. Lond. Math. Soc.},
   volume={45},
   date={2013},
   number={3},
   pages={529--540},
   issn={0024-6093},
   review={\MR{3065022}},
   doi={10.1112/blms/bds114},
}

\bib{MR1913610}{article}{
      author={Karagulyan, G.~A.},
       title={Exponential estimates for the {C}alder\'on-{Z}ygmund operator and
  related problems of {F}ourier series},
        date={2002},
        ISSN={0025-567X},
     journal={Mat. Zametki},
      volume={71},
      number={3},
       pages={398\ndash 411},
  url={http://dx.doi.org.prx.library.gatech.edu/10.1023/A:1014850924850},
      review={\MR{1913610}},
}

\bib{2015arXiv150105818L}{article}{
      author={{Lacey}, M.~T.},
       title={{An elementary proof of the $A\_2$ Bound}},
        date={2015-01},
     journal={ArXiv e-prints},
      eprint={http://arxiv.org/abs/1501.05818},
}

\bib{2016arXiv160906364L}{article}{
      author={{Lacey}, M.~T.},
      author={{Spencer}, S.},
       title={{Sparse Bounds for Oscillatory and Random Singular Integrals}},
        date={2016-09},
     journal={ArXiv e-prints},
      eprint={http://arxiv.org/abs/1609.06364},
}

\bib{MR3285857}{article}{
      author={Lacey, Michael~T.},
      author={Sawyer, Eric~T.},
      author={Shen, Chun-Yen},
      author={Uriarte-Tuero, Ignacio},
       title={Two-weight inequality for the {H}ilbert transform: a real
  variable characterization, {I}},
        date={2014},
        ISSN={0012-7094},
     journal={Duke Math. J.},
      volume={163},
      number={15},
       pages={2795\ndash 2820},
  url={http://dx.doi.org.prx.library.gatech.edu/10.1215/00127094-2826690},
      review={\MR{3285857}},
}

\bib{2015arXiv151207247L}{article}{
   author={Lerner, Andrei K.},
   title={On pointwise estimates involving sparse operators},
   journal={New York J. Math.},
   volume={22},
   date={2016},
   pages={341--349},
}

\bib{MR3085756}{article}{
      author={Lerner, Andrei~K.},
       title={A simple proof of the {$A_2$} conjecture},
        date={2013},
        ISSN={1073-7928},
     journal={Int. Math. Res. Not. IMRN},
      number={14},
       pages={3159\ndash 3170},
      review={\MR{3085756}},
}

\bib{MR1998349}{article}{
      author={Nazarov, F.},
      author={Treil, S.},
      author={Volberg, A.},
       title={The {$Tb$}-theorem on non-homogeneous spaces},
        date={2003},
        ISSN={0001-5962},
     journal={Acta Math.},
      volume={190},
      number={2},
       pages={151\ndash 239},
         url={http://dx.doi.org/10.1007/BF02392690},
      review={\MR{1998349}},
}

\bib{MR3124931}{article}{
      author={Ortiz-Caraballo, Carmen},
      author={P{\'e}rez, Carlos},
      author={Rela, Ezequiel},
       title={Exponential decay estimates for singular integral operators},
        date={2013},
        ISSN={0025-5831},
     journal={Math. Ann.},
      volume={357},
      number={4},
       pages={1217\ndash 1243},
  url={http://dx.doi.org.prx.library.gatech.edu/10.1007/s00208-013-0940-3},
      review={\MR{3124931}},
}

\bib{MR676801}{article}{
      author={Sawyer, Eric~T.},
       title={A characterization of a two-weight norm inequality for maximal
  operators},
        date={1982},
        ISSN={0039-3223},
     journal={Studia Math.},
      volume={75},
      number={1},
       pages={1\ndash 11},
      review={\MR{676801}},
}

\bib{MR930072}{article}{
      author={Sawyer, Eric~T.},
       title={A characterization of two weight norm inequalities for fractional
  and {P}oisson integrals},
        date={1988},
        ISSN={0002-9947},
     journal={Trans. Amer. Math. Soc.},
      volume={308},
      number={2},
       pages={533\ndash 545},
         url={http://dx.doi.org.prx.library.gatech.edu/10.2307/2001090},
      review={\MR{930072}},
}

\bib{MR1232192}{book}{
      author={Stein, Elias~M.},
       title={Harmonic analysis: real-variable methods, orthogonality, and
  oscillatory integrals},
      series={Princeton Mathematical Series},
   publisher={Princeton University Press, Princeton, NJ},
        date={1993},
      volume={43},
        ISBN={0-691-03216-5},
        note={With the assistance of Timothy S. Murphy, Monographs in Harmonic
  Analysis, III},
      review={\MR{1232192}},
}

\bib{2016arXiv160603340V}{article}{
      author={{Volberg}, A.},
      author={{Zorin-Kranich}, P.},
       title={{Sparse domination on non-homogeneous spaces with an application
  to $A\_p$ weights}},
        date={2016-06},
     journal={ArXiv e-prints},
      eprint={http://arxiv.org/abs/1606.03340},
}

\end{biblist}
\end{bibdiv}

\end{document}